\newtheorem{theorem}{Theorem}[section]
\newtheorem{lemma}[theorem]{Lemma}
\newtheorem{corollary}[theorem]{Corollary}
\newtheorem{proposition}[theorem]{Proposition}
\DeclareMathOperator*{\argmin}{argmin}
\DeclareMathOperator*{\st}{s.t.}
\title{On the Set of Possible Minimizers of a Sum of Convex Functions}
\author{Moslem Zamani$^{1}$,  Fran\c{c}ois Glineur$^{1,2}$, and Julien M. Hendrickx$^{1}$
\thanks{$^{1}$ ICTEAM/INMA / $^{2}$ CORE, Université catholique de Louvain, B-1348 Louvain-la-Neuve, Belgium. 
       {\tt\scriptsize \{moslem.zamani, francois.glineur, julien.hendrickx\}@uclouvain.be}}%
}
\begin{document}
\maketitle

\begin{abstract}
Consider a sum of convex functions, where the only information known about each individual summand is the location of a minimizer. In this work, we give an exact characterization of the set of possible minimizers of the sum.

Our results cover several types of assumptions on the summands, such as smoothness or strong convexity. Our main tool is the use of necessary and sufficient conditions for interpolating the considered function classes, which leads to shorter and more direct proofs in comparison with previous work.

We also address the setting where each summand minimizer is assumed to lie in a unit ball, and prove a tight bound on the norm of any minimizer of the sum. 

\end{abstract}

\section{Introduction}
Consider the following optimization problem 
\begin{equation}\label{P}
\textstyle \min_{x\in\mathbb{R}^n} \ f(x):=\textstyle\sum_{i\in [m]} f_i(x),
\end{equation}
where $f_i:\mathbb{R}^n\to (-\infty, \infty]$, $i\in [m]$, are convex functions. Problem \eqref{P} appears naturally in a variety of settings, including distributed optimization; see \cite{nedic2018distributed, yang2019survey} and references therein.  In some situations including resilient distributed optimization \cite{sundaram2018distributed} and some federated learning setups \cite{konevcny2016federated}, functions $f_i$'s are unknown and the only knowledge about each summand is the location of a  minimizer $x_i^\star$, that is, 
\[ x_i^\star \in\argmin_{x\in\mathbb{R}^n} f_i(x), \ \ \ \
i\in [m]. \]
\noindent The problem we consider is to determine the set of potential 
minimizers of problem \eqref{P} using only those known individual summand minimizers $x_i^\star$ and some additional properties of the functions $f_i$, such as smoothness or strong convexity.

\smallskip In a recent line of work, Kuwaranancharoen and Sundaram \cite{kuwaranancharoen2018location, kuwaranancharoen2020set, kuwaranancharoen2023minimizer} study this problem for $m=2$ differentiable and strongly convex functions, with the additional knowledge of an upper bound on the norm of the gradient of each summand at the minimizer $x^\star$ (see Proposition \eqref{Prop.3} for our result in a similar setting). Using a geometric approach, they provide a near-exact characterization of the set of possible minimizers, using \emph{ad hoc} coordinates (more precisely, \cite[Theorem 6.2]{kuwaranancharoen2023minimizer} characterizes the boundary of the region of potential minimizers, which coincides with its interior). Assuming that the summands are smooth (i.e. that their gradient is Lipschitz) is also mentioned in \cite{kuwaranancharoen2023minimizer} as a possibility for further work.


Hendrickx and Rabbat consider a sum of smooth strongly convex functions in the context of 
open multi-agent systems \cite{{hendrickx2020stability}}. They assume all summand minimizers lie in a ball centered at the origin, and derive an upper bond for the norm of a potential minimizer of the sum, expressed in terms of the condition number of the summands \cite[Theorem 1]{hendrickx2020stability}.

In this paper, we show how the use of the interpolation conditions \cite{taylor2017smooth,rubbens2023interpolation} described in Section~\ref{Sec.I}, provides a more direct and algebraic approach to tackle these problems. We give in Section~\ref{Sec.1} an exact characterization of the set of potential minimizers of a sum of two smooth strongly convex functions, and we provide an alternative, simpler characterization of set of possible minimizers of a sum of two strongly convex functions in the setting of \cite{kuwaranancharoen2023minimizer}.

In Section~\ref{Sec.2}, we extend our results in a straightforward way to sums involving arbitrary numbers of functions, and to a variation of the problem where all but one function is smooth.
Finally we provide in Section~\ref{Sec.J} a tight bound for the setting where each summand minimizer lies in a ball.



\section{Preliminaries and Interpolation Conditions}\label{Sec.I}

\subsection{Notations and Definitions}

Throughout the paper, $\|\cdot\|$  and $\langle\cdot,\cdot\rangle$ denote the Euclidean norm and the dot product, respectively. We also use iff as an abbreviation for if and only if. The set of the first $m$ natural numbers is denoted by $[m]$, and $[\ell, m]$ denotes $[m] \setminus [\ell-1]$.

Let $f:\mathbb{R}^n\to(-\infty, \infty]$ be an extended real valued convex function. Function $f$ is proper if its effective domain, defined as $\mathrm{dom}\ f = \{x: f(x)<\infty\}$, is non-empty. It is closed if its epigraph $\{(x, r): f(x)\leq r\}$ is a closed subset of $\mathbb{R}^{n+1}$. The set of subgradients at $x \in \mathrm{dom}\ f$ (subdifferential) is
$$
\partial f(x)=\{g: f(y)\geq f(x)+\langle g, y-x \rangle, \forall y\in\mathbb{R}^n\}. $$
 
\noindent Let $L\in(0, \infty]$ and $\mu\in [0, \infty)$. A convex function $f:\mathbb{R}^n\to(-\infty, \infty]$ is said to be $L$-smooth if for any $x_1, x_2\in\mathbb{R}^n$,
$$
\|g_1-g_2\|\leq L\|x_1-x_2\| \ \ \forall  g_1\in\partial f(x_1),\  g_2\in\partial f(x_2).
$$
If $L<\infty$, then $f$ must be differentiable on $\mathbb{R}^n$. In addition, any convex function is $\infty$-smooth.
 Furthermore, a function $f:\mathbb{R}^n\to(-\infty, \infty]$ is said to be $\mu$-strongly convex if
 the function $x \mapsto f(x)-\tfrac{\mu}{2}\| x\|^2$ is convex.
Note that any convex function is $0$-strongly convex. 
  We denote the set of closed proper convex functions that are $L$-smooth and $\mu$-strongly convex by $\mathcal{F}_{\mu,L}(\mathbb{R}^n)$. In addition, $\mathcal{M}_{\mu,L}(x^\star)$ stands for the set of functions in $\mathcal{F}_{\mu,L}(\mathbb{R}^n)$ for which $x^\star$ is a minimizer. We make the convention $\tfrac{a}{\infty} = 0$ for $a\in\mathbb{R}$.

\subsection{Interpolation Results}

Interpolation constraints are necessary and sufficient conditions for a finite set of vectors and values to be consistent with an actual function in a given function class. For example, we will heavily rely on the following interpolation result for the class of smooth strongly convex functions.

\smallskip 
\begin{theorem}\label{sscint} (see \cite[Theorem 4]{taylor2017smooth}) Consider the class of functions $\mathcal{F}_{\mu,L}(\mathbb{R}^n)$ with $0\leq\mu< L\leq \infty$. Given a set of triplets $\{(x_i; g_i; f_i)\}_{i\in [m]}$ with $x_i,g_i\in\mathbb{R}^n$ and $f_i\in \mathbb{R}$, there exists a function $f\in\mathcal{F}_{\mu,L}(\mathbb{R}^n)$ satisfying 
$$
f(x_i)=f_i, \ \ g_i\in\partial f(x_i),\ i\in [m],
$$
i.e.\@ a function that interpolates the function and (sub)gradient values as prescribed in $(x_i; g_i; f_i)$, {if and only if} the following set of interpolating conditions is satisfied:
\begin{align}\label{int_m}
\nonumber &f_i-f_j-\langle g_j, x_i-x_j\rangle\geq (1-\tfrac{\mu}{L})^{-1} \big(\tfrac{1}{2L}\|g_i-g_j\|^2+ \\
& \ \  \tfrac{\mu}{2}\|x_i-x_j\|^2-\tfrac{\mu}{L}\langle g_i-g_j, x_i-x_j\rangle\big), \ \ \ \forall i, j\in [m].
\end{align}
\end{theorem}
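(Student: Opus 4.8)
The plan is to prove the two implications separately, treating the necessity of conditions \eqref{int_m} as the routine direction and the sufficiency (the explicit construction of an interpolating function) as the substantive one.

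For necessity, suppose $f\in\mathcal{F}_{\mu,L}(\mathbb{R}^n)$ interpolates the data. I would first record the two-point inequality valid for every pair of points of such a function. Writing $h=f-\tfrac{\mu}{2}\|\cdot\|^2$, the function $h$ is convex and $(L-\mu)$-smooth, so the standard co-coercivity (Baillon--Haddad) inequality $h(x)-h(y)-\langle \nabla h(y), x-y\rangle \ge \tfrac{1}{2(L-\mu)}\|\nabla h(x)-\nabla h(y)\|^2$ applies to $h$. Substituting $\nabla h(x)=\nabla f(x)-\mu x$ and expanding turns this into exactly \eqref{int_m} evaluated at the interpolation data; applying it to each ordered pair $(i,j)$ yields the claim. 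The convention $\tfrac{a}{\infty}=0$ and the limiting cases $\mu=0$ or $L=\infty$ must be checked to specialize correctly.

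For sufficiency I would proceed by reduction to the plain convex class $\mathcal{F}_{0,\infty}(\mathbb{R}^n)$, for which an explicit interpolant is available. In that base case \eqref{int_m} collapses to the subgradient inequalities $f_i\ge f_j+\langle g_j, x_i-x_j\rangle$, and the piecewise-affine function $f(x)=\max_{j\in[m]}\{f_j+\langle g_j, x-x_j\rangle\}$ is convex, satisfies $f(x_i)=f_i$ (the $i$-th affine piece dominates at $x_i$ precisely because of those inequalities), and admits $g_i$ as a subgradient at $x_i$. To transfer this to general $\mu,L$ I would compose two operations that each map $\mathcal{F}_{\mu,L}$ data to data of a strictly simpler class: subtracting $\tfrac{\mu}{2}\|\cdot\|^2$ (which sends $\mathcal{F}_{\mu,L}$ to $\mathcal{F}_{0,L-\mu}$, with $g_i\mapsto g_i-\mu x_i$ and $f_i\mapsto f_i-\tfrac{\mu}{2}\|x_i\|^2$), and Fenchel conjugation (which sends a smooth convex $\mathcal{F}_{0,\Lambda}$ function to the strongly convex $\mathcal{F}_{1/\Lambda,\infty}$, swapping the roles of $x_i$ and $g_i$ and replacing $f_i$ by $\langle g_i, x_i\rangle-f_i$). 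Subtracting a quadratic, conjugating, and subtracting a quadratic again reduces any $\mathcal{F}_{\mu,L}$ instance to a convex one; I would then build the max-of-affines interpolant and invert the three operations to recover the desired $f\in\mathcal{F}_{\mu,L}(\mathbb{R}^n)$.

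The main obstacle is verifying that conditions \eqref{int_m} are exactly preserved under each of these transformations --- in particular that the carefully balanced right-hand side, with its factor $(1-\tfrac{\mu}{L})^{-1}$ and its mixed term $\tfrac{\mu}{L}\langle g_i-g_j, x_i-x_j\rangle$, is what conjugation maps to the corresponding condition for the conjugate class. Subtraction of a quadratic is a direct substitution, but the conjugation step requires showing the invariance of \eqref{int_m} under the swap $(x_i,g_i)\leftrightarrow(g_i,x_i)$ together with the parameter change $\Lambda\leftrightarrow 1/\Lambda$, which is the algebraic heart of the argument. Care is also needed at the boundary cases ($\mu=0$, $L=\infty$), where conjugation or the quadratic subtraction degenerates and the base-case construction must be invoked directly.
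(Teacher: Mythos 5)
The paper does not prove this theorem at all---it imports it by citation from \cite[Theorem 4]{taylor2017smooth}, and your proposal is in fact a faithful reconstruction of the proof given in that cited reference: necessity via the smooth-convex inequality applied to $f-\tfrac{\mu}{2}\|\cdot\|^2$, and sufficiency by reducing $\mathcal{F}_{\mu,L}$ to $\mathcal{F}_{0,\infty}$ through quadratic subtraction and Fenchel conjugation (which correctly swaps $(x_i,g_i,f_i)\mapsto(g_i,x_i,\langle g_i,x_i\rangle-f_i)$), followed by the max-of-affines interpolant. Your outline is correct, including the data transformations and the flagged boundary cases $\mu=0$ and $L=\infty$; the only cosmetic slip is labeling the inequality $h(x)-h(y)-\langle\nabla h(y),x-y\rangle\ge\tfrac{1}{2(L-\mu)}\|\nabla h(x)-\nabla h(y)\|^2$ as Baillon--Haddad, when it is the standard lower bound for smooth convex functions (Baillon--Haddad is the cocoercivity of the gradient map).
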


\smallskip Interpolation conditions provide 
an exact characterization of the triplets $\{(x_i; g_i; f_i)\}_{i\in [m]}$ that are consistent with actual functions, and as such, 
are instrumental in the automated tight analysis of black-box optimization algorithm \cite{taylor2017smooth}.

\medskip Our analysis will rely on the following central Lemma, which provides an interpolation condition for a minimizer $x^\star$ and an arbitrary point $x$, only involving those two points and their gradients, but not their function values. 

\smallskip \begin{lemma}\label{Lem1}
Consider the function class $\mathcal{F}_{\mu,L}(\mathbb{R}^n)$ with $0 \le \mu < L \le \infty$. Given two points $x, x^\star \in\mathbb{R}^n$ and a (sub)gradient $g\in\mathbb{R}^n$, there exists a function $f\in\mathcal{M}_{\mu,L}(x^\star)$ with  $g\in\partial f(x)$ if and only if 
\begin{align}\label{int}
& \langle g, x-x^\star\rangle \geq ({1+\tfrac{\mu}{L}})^{-1}\left(\tfrac{1}{L} \left\| g \right\|^2+\mu \| x-x^\star \|^2  \right).
\end{align}
\end{lemma}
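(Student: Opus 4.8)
The plan is to realize Lemma \ref{Lem1} as a two-point specialization of the interpolation theorem (Theorem \ref{sscint}), in which the function values play no role and can be eliminated. First I would translate the requirement that $x^\star$ be a minimizer into the first-order optimality condition $0\in\partial f(x^\star)$, so that asking for $f\in\mathcal{M}_{\mu,L}(x^\star)$ with $g\in\partial f(x)$ amounts to asking for $f\in\mathcal{F}_{\mu,L}(\mathbb{R}^n)$ that interpolates the two triplets $(x^\star;0;f_1)$ and $(x;g;f_2)$ for \emph{some} values $f_1,f_2\in\mathbb{R}$.

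Applying Theorem \ref{sscint} with $m=2$ (legitimate since $\mu<L$), such an $f$ exists iff the two instances of \eqref{int_m} corresponding to the ordered pairs $(i,j)=(2,1)$ and $(i,j)=(1,2)$ both hold. Substituting $g_1=0$, $x_1=x^\star$, $x_2=x$, $g_2=g$, the decisive observation is that swapping $i$ and $j$ flips the sign of both $g_i-g_j$ and $x_i-x_j$ simultaneously, so the right-hand side of \eqref{int_m} is the \emph{same} quantity $R$ for both pairs. The two conditions therefore read $f_2-f_1\ge R$ and $\langle g,x-x^\star\rangle-(f_2-f_1)\ge R$.

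Since $f_1,f_2$ enter only through the difference $\Delta:=f_2-f_1$, a consistent choice exists iff the interval $[\,R,\ \langle g,x-x^\star\rangle-R\,]$ is nonempty, i.e.\@ iff $\langle g,x-x^\star\rangle\ge 2R$. The last step is to insert the explicit value of $R$, multiply through by $(1-\tfrac{\mu}{L})$, and move the cross term $-\tfrac{\mu}{L}\langle g,x-x^\star\rangle$ to the left-hand side; the coefficient of $\langle g,x-x^\star\rangle$ collapses to $1+\tfrac{\mu}{L}$ and one recovers exactly \eqref{int}. The ``only if'' direction is the same computation read backwards, taking $f_1=f(x^\star)$ and $f_2=f(x)$.

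I expect no serious obstacle: the argument is conceptually just the elimination of one free scalar from a pair of linear inequalities. The only points requiring care are the symmetry observation that makes the two right-hand sides coincide (without which the elimination would be messier), and verifying that the reductions remain valid in the limiting cases $L=\infty$ and $\mu=0$ under the convention $\tfrac{a}{\infty}=0$.
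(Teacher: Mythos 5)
Your proposal is correct and follows essentially the same route as the paper's proof: reduce to two-point interpolation via the optimality condition $0\in\partial f(x^\star)$, apply Theorem~\ref{sscint}, note that both ordered pairs yield the same right-hand side, and eliminate the free function values (your interval-nonemptiness argument for $\Delta$ is exactly the paper's two steps packaged together, with the paper's explicit choice $f_x=(1-\tfrac{\mu}{L})^{-1}E(x,g,x^\star)$, $f^\star=0$ corresponding to picking the left endpoint of your interval). The final algebraic rearrangement to \eqref{int} also matches the paper's computation.
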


\smallskip \begin{proof}
The existence of a function $f$ satisfying the assumptions in the Lemma is equivalent to asking that the two triplets $\{(x;g;f_x),(x^\star;0;f^\star) \}$ are interpolable by a function in $\mathcal{F}_{\mu,L}(\mathbb{R}^n)$ for some values of $f_x$ and $f^\star$. Indeed, we have $f_x = f(x)$, $f^\star = f(x^\star)$ and the optimality condition for a minimizer of a convex function gives $g^\star = 0\in\partial f(x^\star)$. 
Theorem~\ref{sscint} states that it is equivalent to the following pair of conditions \eqref{int_m}, first for $(x_i,x_j) = (x,x^\star)$ then for $(x^\star,x)$: \begin{align*} f_x - f^\star  & \geq  (1-\tfrac{\mu}{L})^{-1} E(x,g,x^\star) \\ f^\star - f_x -  \langle g, x^\star-x \rangle & \geq  (1-\tfrac{\mu}{L})^{-1} E(x,g,x^\star)  \end{align*} 
where $E(x,g,x^\star) = \tfrac{1}{2L}\|g \|^2+ \tfrac{\mu}{2}\|x-x^\star\|^2-\tfrac{\mu}{L}\langle g, x-x^\star \rangle$.
Hence we need to show that these two conditions holds for some $f_x$ and $f^\star$ if and only if inequality~\eqref{int} is satisfied.


\smallskip We first establish the only if part. Summing the above two conditions leads to $-\langle g, x^\star-x \rangle \ge 2 (1-\tfrac{\mu}{L})^{-1} E(x,g,x^\star)$ which is equivalent to
\[(1-\tfrac{\mu}{L}) \langle g, x-x^\star \rangle  \ge \tfrac{1}{L}\|g \|^2+ \mu \|x-x^\star\|^2-\tfrac{2\mu}{L}\langle g, x-x^\star \rangle  \]
and can be rearranged as inequality~\eqref{int} in the Lemma.

\smallskip \noindent To prove the if part, we assume inequality~\eqref{int} holds and show that the above two conditions are satisfied with the following
\[ f_x = (1-\tfrac{\mu}{L})^{-1} E(x,g,x^\star), \quad f^\star=0 \]
It is easy to check that this choice of $f_x$ and $f^\star$ leads to an equality in the first condition. Furthermore, as the sum of the two inequalities was shown above to be equivalent to condition~\eqref{int}, we can conclude that the second inequality is also satisfied.
This concludes the proof of the equivalence.





Note that condition~\eqref{int} can be equivalently rewritten as  \begin{equation} \label{eq:geom}  \big\| g - \tfrac{L+\mu}{2} (x^\star-x) \big\| \le  \tfrac{L-\mu}{2} \big\|x^\star-x \big\|  \end{equation}
when $L<\infty$, which is easier to interpret geometrically: it states that $g$ must belong to a ball centered at $\tfrac{L+\mu}{2} (x^\star-x)$ with radius equal to  $\tfrac{L-\mu}{2} \big\|x^\star-x \big\|$ (to verify the equivalence, square both sides and distribute the left-hand side).
\end{proof}


\section{Minimizers of a sum of two convex functions}\label{Sec.1}

We first study the case of a sum of two smooth (strongly) convex functions. More precisely, given two points $x_1^\star, x^\star_2$, we want to characterize the set of potential minimizers of a sum $f_1(x)+f_2(x)$ where $f_1$ and $f_2$ are smooth and strongly convex and have a minimizer at $x_1^\star$ and $x_2^\star$ respectively.
 


\smallskip 
\begin{proposition}\label{Prop.1}
Let $0 \le \mu_1 < L_1 < \infty$ and $0 \le \mu_2 < L_2 < \infty$. Given two points $x_1^\star, x^\star_2 \in \mathbb{R}^n$, a point $x^\star \in \mathbb{R}^n$ is a minimizer of the sum $f_1(x)+f_2(x)$ for some functions $f_1\in\mathcal{M}_{\mu_1,L_1}(x^\star_1)$ and $f_2\in\mathcal{M}_{\mu_2,L_2}(x^\star_2)$ if and only if 
\begin{align}\label{M.inq}
\nonumber \left\| (L_1+\mu_1)(x^\star-x_1^\star)+(L_2+\mu_2)(x^\star-x_2^\star) \right\|\leq \\
(L_1-\mu_1)\| x^\star-x^\star_1 \|+(L_2-\mu_2)\| x^\star-x^\star_2 \|.
\end{align}
\end{proposition}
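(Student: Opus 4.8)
The plan is to reduce the statement about the minimizer of the sum to a purely geometric intersection condition between two balls, using Lemma~\ref{Lem1} to describe the set of admissible gradients of each summand.

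First I would translate the optimality condition into a constraint on gradients. Since $L_1, L_2 < \infty$, both $f_1$ and $f_2$ are differentiable, and as they are finite-valued convex functions on $\mathbb{R}^n$ the subdifferential of the sum equals the sum of the subdifferentials; hence $x^\star$ minimizes $f_1+f_2$ if and only if $\nabla f_1(x^\star) + \nabla f_2(x^\star) = 0$. Writing $g_i = \nabla f_i(x^\star)$, the existence of admissible $f_1, f_2$ for which $x^\star$ is a minimizer of their sum is therefore equivalent to the existence of vectors $g_1, g_2$ with $g_1 + g_2 = 0$ such that each $g_i$ is the gradient at $x^\star$ of some function in $\mathcal{M}_{\mu_i,L_i}(x_i^\star)$. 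Since the two summands may be chosen independently, the only coupling between them is the constraint $g_2 = -g_1$.

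Next I would invoke Lemma~\ref{Lem1} in its geometric form~\eqref{eq:geom}, applied with minimizer $x_i^\star$ and evaluation point $x^\star$. This shows that $g_i$ is admissible precisely when it lies in the closed ball $B_i$ centered at $c_i = \tfrac{L_i+\mu_i}{2}(x_i^\star - x^\star)$ with radius $r_i = \tfrac{L_i-\mu_i}{2}\|x_i^\star - x^\star\|$. Imposing $g_2 = -g_1$, the condition becomes that $g_1$ belongs to both $B_1$ and the reflected ball $-B_2$, which is centered at $-c_2$ with radius $r_2$; such a $g_1$ exists if and only if these two balls intersect.

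The final step rests on the elementary but central geometric fact that two closed balls with centers $a, b$ and radii $r, s$ in $\mathbb{R}^n$ have nonempty intersection if and only if $\|a - b\| \le r + s$. Applying this with $a = c_1$ and $b = -c_2$ yields $\|c_1 + c_2\| \le r_1 + r_2$, which after multiplying through by $2$ and using $(L_i+\mu_i)(x_i^\star - x^\star) = -(L_i+\mu_i)(x^\star - x_i^\star)$ is exactly inequality~\eqref{M.inq}. I expect the main (though still routine) obstacle to be proving this two-ball criterion cleanly in both directions: the \emph{only if} direction is a direct application of the triangle inequality, whereas the \emph{if} direction requires exhibiting an explicit point in the intersection, for instance a suitable point on the segment joining the two centers, while being careful with degenerate cases such as $x^\star = x_i^\star$, where a ball collapses to the single point $g_i = 0$.
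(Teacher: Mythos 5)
Your proposal is correct and follows essentially the same route as the paper's own proof: reduce to the existence of gradients with $g_1+g_2=0$ via first-order optimality, apply Lemma~\ref{Lem1} in its geometric form~\eqref{eq:geom} to each summand, substitute $g_2=-g_1$, and conclude with the criterion that two balls intersect iff the distance between their centers is at most the sum of their radii. The only differences are cosmetic: you spell out the subdifferential sum rule and flag the degenerate case $x^\star=x_i^\star$, both of which the paper leaves implicit (and note that the sign convention for the ball centers is immaterial, since only the norm of their sum enters the final inequality).
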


\smallskip 

\begin{proof}
Optimality conditions imply that $x^\star$ is a minimizer of the sum $f_1(x)+f_2(x)$ if and only if there exists subgradients of $f_1$ and $f_2$ at $x^\star$ that sum to zero, i.e.\@ if and only if there exists $g_1$ and $g_2$ such that
\[
g_1\in\partial f_1(x^\star), g_2\in\partial f_2(x^\star), g_1+g_2=0.
\]

Lemma \ref{Lem1} with the geometrical inequality \eqref{eq:geom} applied to each function $f_1$ and $f_2$ implies that $x^\star$ is a potential minimizer iff the following system has a solution,
\begin{align*}
& \big\| g_1-\tfrac{L_1+\mu_1}{2}(x^\star-x_1^\star) \big\|\leq
\tfrac{L_1-\mu_1}{2}\| x^\star-x^\star_1 \| ,\\
& \big\| g_2-\tfrac{L_2+\mu_2}{2}(x^\star-x_2^\star) \big\|\leq
\tfrac{L_2-\mu_2}{2}\| x^\star-x^\star_2 \|,\\
& g_1+g_2=0.
\end{align*}
By  replacing $g_2$ by $-g_1$, this system is equivalent to
\begin{align*}
& \big\| g_1-\tfrac{L_1+\mu_1}{2}(x^\star-x_1^\star) \big\|\leq
\tfrac{L_1-\mu_1}{2}\| x^\star-x^\star_1 \| ,\\
& \big\| g_1+\tfrac{L_2+\mu_2}{2}(x^\star-x_2^\star) \big\|\leq
\tfrac{L_2-\mu_2}{2}\| x^\star-x^\star_2 \|.
\end{align*}
Geometrically, this system is asking for $g_1$ to belong to the intersection of two balls. This is possible if and only if the distance between the centers of the two balls is less than or equal to the sum of their radii, which gives the desired inequality. 
\end{proof}

\begin{figure}[h]
    \centering
    \subfigure[$\mu_{1},\mu_2=1, L_1=5, L_2=15$]
    {
      \includegraphics[height=1.5in, width=1.5in]{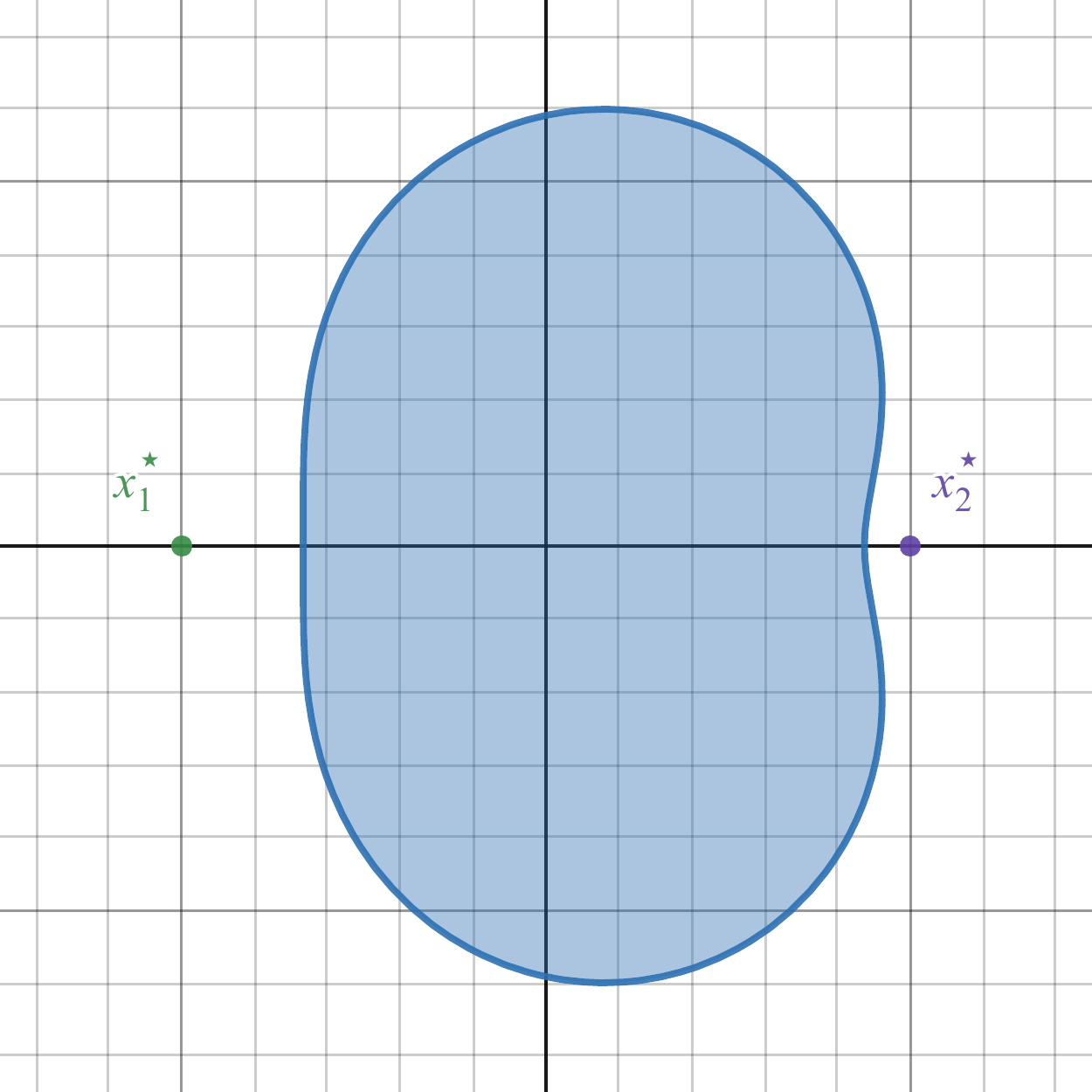}  
    }
    \subfigure[$\mu_1=1, \mu_2=4, L_{1},L_2=5 $]
    {
    \includegraphics[height=1.5in, width=1.5in]{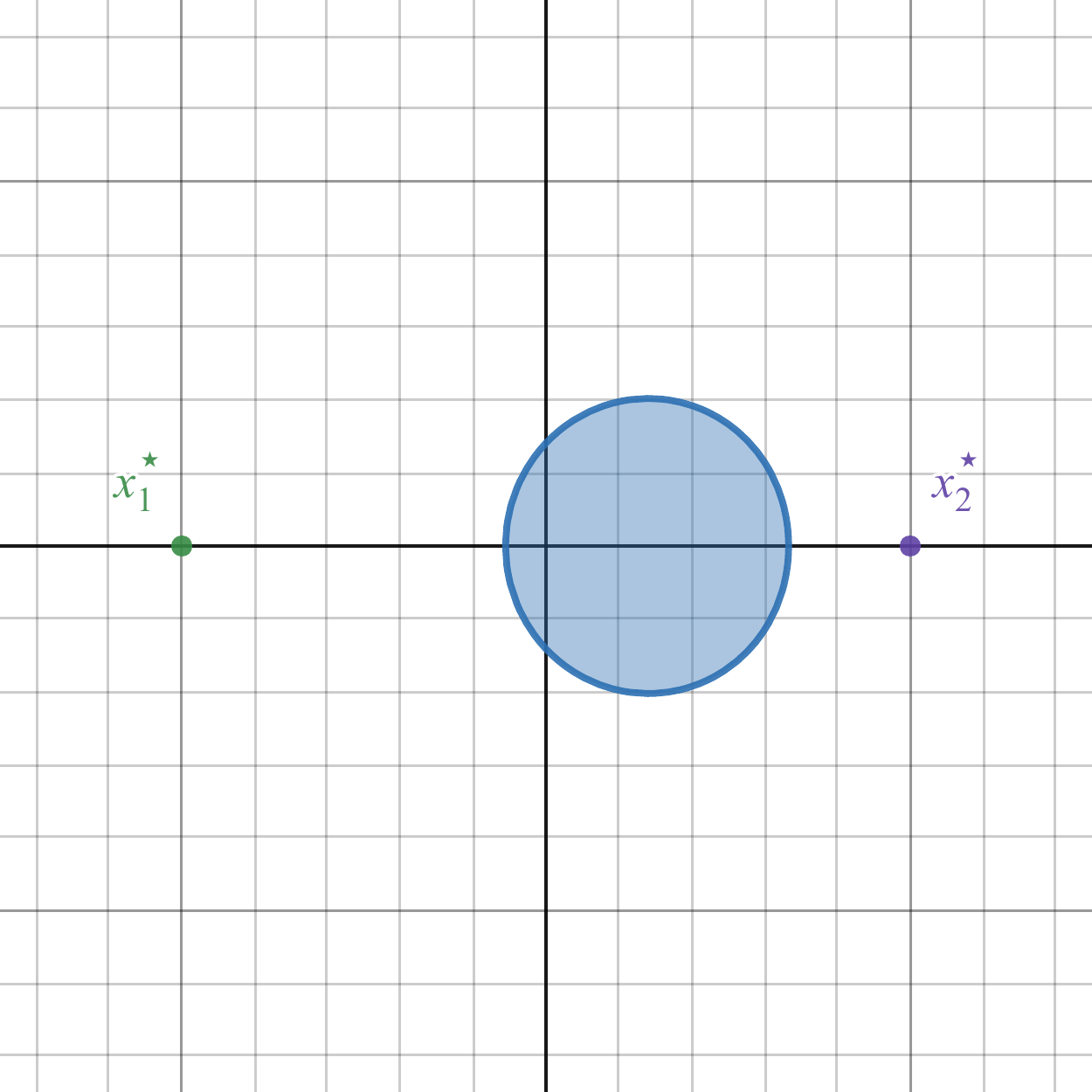}
    }
    \caption{Sets of potential minimizers $x^\star$  from Proposition \ref{Prop.1} 
    with $x_1^\star = (-1, 0)$ and $x_2^\star = (1, 0)$.}
    \label{fig.P1}
\end{figure}

\smallskip
One can verify that this set is always bounded provided $\mu_1>0$ or $\mu_2>0$. Indeed, as the norm of $x^\star$ tends to infinity, $x_1^\star$ and $x_2^\star$ become negligible in \eqref{M.inq}, leading in the limit to condition $(L_1+\mu_1+L_2+\mu_2)\|x^\star\| \leq (L_1-\mu_1+L_2-\mu_2)\| x^\star\|$ which cannot be satisfied unless $\mu_1+\mu_2=0$.
On the other hand, when $\mu_1=\mu_2=0$, condition \eqref{M.inq} becomes
\[ \left\| L_1(x^\star-x_1^\star)+L_2(x^\star-x_2^\star) \right\|\leq 
L_1 \| x^\star-x^\star_1 \|+L_2 \| x^\star-x^\star_2 \| \]
which is the triangle inequality. This implies that any point $x^\star \in \mathbb{R}^n$ can be a minimizer (which can also be derived from the choice $f_1(x)=f_2(x)=0$).

\smallskip Observe that the following point 
 \[ x^\star=\tfrac{L_1+\mu_1}{L_1+\mu_1+L_2+\mu_2}x^\star_1+\tfrac{L_2+\mu_2}{L_1+\mu_1+L_2+\mu_2}x^\star_2\]
satisfies inequality \eqref{M.inq} strictly when $x^\star_1\neq x^\star_2$, from which one can infer that the set of potential minimizers has a nonempty interior. 
Figure \ref{fig.P1} shows the set of potential minimizer $x^\star$ from Proposition \ref{Prop.1} for two different sets of parameters in $\mathbb{R}^2$.

\bigskip We now consider a situation where one of the functions is not smooth, assuming without loss of generality $L_2=\infty$.

\smallskip
\begin{proposition}\label{Prop.2}
Let $0 \le \mu_1 < L_1 < \infty$ and $0 \le \mu_2$. Given two points $x_1^\star, x^\star_2 \in \mathbb{R}^n$, a point $x^\star \in \mathbb{R}^n$ is a minimizer of the sum $f_1(x)+f_2(x)$ for some functions $f_1\in\mathcal{M}_{\mu,L}(x^\star_1)$ and $f_2\in\mathcal{M}_{\mu,L}(x^\star_2)$  if and only if the following holds
\begin{align}\label{M.inq2}
\nonumber  \mu_2\|x^\star-x_2^\star\|^2+\tfrac{L_1+\mu_1}{2}\langle x^\star-x_1^\star, x^\star-x^\star_2\rangle \leq \\
\tfrac{L_1-\mu_1}{2}\| x^\star-x^\star_1 \|\| x^\star-x^\star_2 \|.
\end{align}
\end{proposition}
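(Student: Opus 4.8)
The plan is to follow the same skeleton as the proof of Proposition~\ref{Prop.1}, but to exploit the fact that the smoothness constraint on $f_2$ degenerates when $L_2=\infty$. First I would invoke the unchanged optimality condition: $x^\star$ minimizes $f_1+f_2$ if and only if there exist $g_1\in\partial f_1(x^\star)$ and $g_2\in\partial f_2(x^\star)$ with $g_1+g_2=0$. Substituting $g_2=-g_1$, this reduces the statement to the feasibility of a system in the single unknown $g_1$.

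Next I would translate each subgradient membership through Lemma~\ref{Lem1}, applied in each case with minimizer $x_i^\star$ and point $x^\star$. For $f_1$, since $L_1<\infty$, the geometric form \eqref{eq:geom} gives that $g_1$ must lie in the ball centered at $c:=\tfrac{L_1+\mu_1}{2}(x^\star-x_1^\star)$ of radius $r:=\tfrac{L_1-\mu_1}{2}\|x^\star-x_1^\star\|$. For $f_2$, the key observation is that the raw inequality \eqref{int}, taken with $L_2=\infty$ and the convention $\tfrac{a}{\infty}=0$, collapses from a ball to a single linear constraint, namely $\langle g_2,\,x^\star-x_2^\star\rangle\ge\mu_2\|x^\star-x_2^\star\|^2$. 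Writing $a:=x^\star-x_2^\star$ and using $g_2=-g_1$, this becomes the half-space condition $\langle g_1,a\rangle\le -\mu_2\|a\|^2=:b$ on $g_1$.

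The system is therefore feasible if and only if the ball $\{g_1:\|g_1-c\|\le r\}$ meets the half-space $\{g_1:\langle g_1,a\rangle\le b\}$. I would invoke the standard criterion that this intersection is nonempty exactly when the minimum of the linear functional $g_1\mapsto\langle g_1,a\rangle$ over the ball does not exceed $b$; that minimum is attained at $g_1=c-r\tfrac{a}{\|a\|}$ and equals $\langle c,a\rangle-r\|a\|$. Substituting the expressions for $c$, $r$, and $b$ and rearranging yields precisely inequality~\eqref{M.inq2}. I expect the main thing to get right is the $L_2=\infty$ degeneration --- checking that \eqref{int} genuinely reduces to a half-space and that the convention $\tfrac{a}{\infty}=0$ is applied consistently --- together with the boundary case $x^\star=x_2^\star$ (i.e.\ $a=0$), where the half-space is all of $\mathbb{R}^n$, the ball is automatically met, and \eqref{M.inq2} reduces to $0\le 0$, so both sides of the equivalence hold trivially.
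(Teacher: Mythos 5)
Your proposal is correct and follows essentially the same route as the paper's proof: optimality conditions with $g_2=-g_1$, Lemma~\ref{Lem1} giving the ball constraint \eqref{eq:geom} for $f_1$ and the half-space constraint \eqref{int} (with $L_2=\infty$) for $f_2$, and the ball--half-space intersection criterion yielding \eqref{M.inq2}. Your explicit treatment of the minimum of the linear functional over the ball and of the degenerate case $x^\star=x_2^\star$ simply fills in details the paper leaves implicit.
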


\smallskip \begin{proof}
Due to Lemma \ref{Lem1} and the optimality conditions for the sum $f_1(x)+f_2(x)$, $x^\star$ is a potential minimizer iff the following system has a solution (using both \eqref{int} and \eqref{eq:geom})
\begin{align*}
& \big\| g_1-\tfrac{L_1+\mu_1}{2}(x^\star-x_1^\star) \big\|\leq
\tfrac{L_1-\mu_1}{2}\| x^\star-x^\star_1 \| ,\\
& \langle g_2, x^\star-x_2^\star\rangle \geq
\mu_2\| x^\star-x^\star_2 \|^2,\\ 
& g_1+g_2=0.
\end{align*}
Replacing $g_2$ by $-g_1$, the above system becomes
\begin{align*}
& \big\| g_1-\tfrac{L_1+\mu_1}{2}(x^\star-x_1^\star) \big\|\leq
\tfrac{L_1-\mu_1}{2}\| x^\star-x^\star_1 \|,\\
& \langle -g_1, x^\star-x_2^\star\rangle \geq
\mu_2\| x^\star-x^\star_2 \|^2.
\end{align*}
The desired inequality follows from the necessary and sufficient condition for the intersection between a ball and a half-space to be nonempty. 
\end{proof}

Proposition \ref{Prop.2} also corresponds to Proposition \ref{Prop.1} in the limit when $L_2$ tends to infinity; see the discussion before Corollary \ref{Coro.3_2}. 
Figure \ref{fig.P2} shows the set of potential minimizer $x^\star$ from Proposition \ref{Prop.2} for two different sets of parameters in $\mathbb{R}^2$. 
As before, this set is bounded when $\mu_1+\mu_2>0$.
 
\begin{figure}
    \centering
    \subfigure[$\mu_1=1, \mu_2=3,L_1=4$.]
    {
      \includegraphics[height=1.5in, width=1.5in]{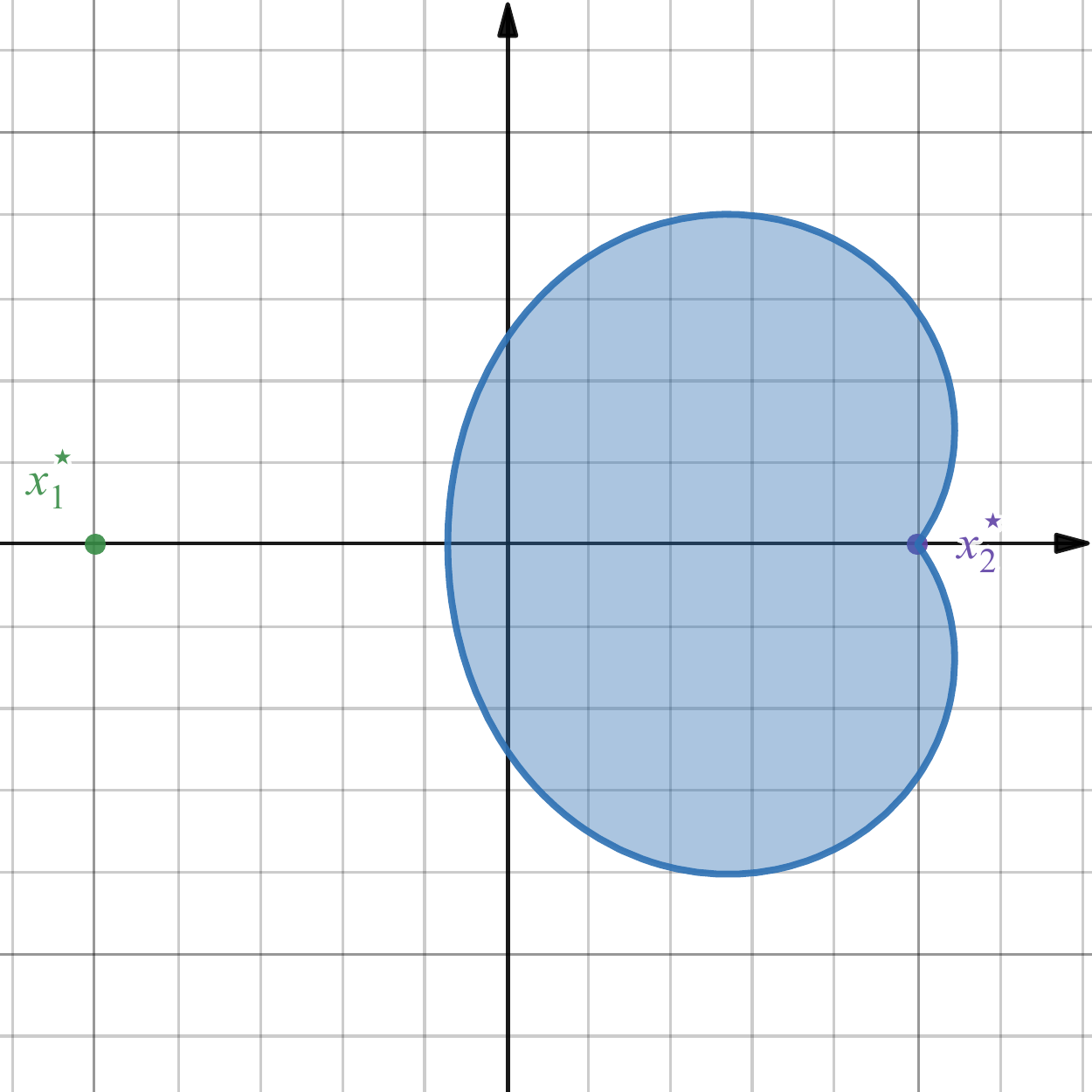}  
    }
    \subfigure[$\mu_1=1, \mu_2=0,L_1=1.5$.]
    {
    \includegraphics[height=1.5in, width=1.5in]{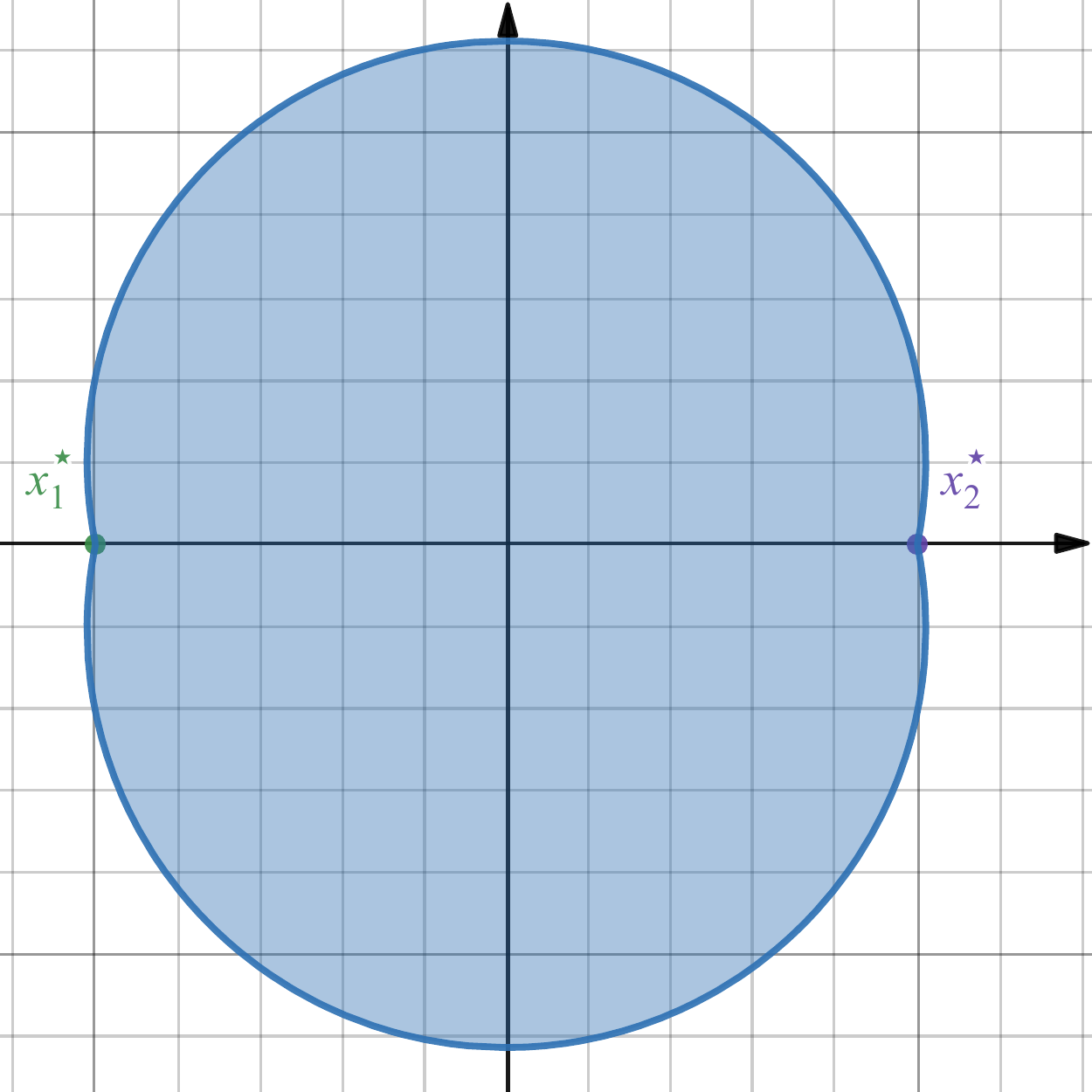}
    }
    \caption{Sets of potential minimizers $x^\star$  from Proposition \ref{Prop.2} 
    with $x_1^\star = (-1, 0)$ and $x_2^\star = (1, 0)$.}
    \label{fig.P2}
\end{figure}

\bigskip 
When none of the functions is smooth, i.e. when $L_1=L_2=\infty$, the situation becomes qualitatively different. Indeed, as shown in \cite{kuwaranancharoen2023minimizer}, the set of potential minimizers becomes unbounded and its closure is equal to the whole $\mathbb{R}^n$ when $n\geq 2$. Hence, similarly to \cite{kuwaranancharoen2023minimizer}, we consider an additional condition on the minimizer: we assume that the subgradients of $f_1$ and $f_2$ at the minimizer $x^\star$ have their norm bounded by a constant $B$ (note that since those subgradients sum to zero, they have the same norm).





\smallskip Since the case $\mu_1=\mu_2=0$ is uninteresting (all points can be minimizers in that case), we assume from here that $\mu_1 + \mu_2 > 0$, i.e. that function $f$ is strongly convex.
Using condition~\eqref{int} from Lemma~\ref{Lem1}, we have that the subgradient $g_1$ at a potential minimizer $x^\star$ must satisfy $\|g_1\|\geq \mu_1 \|x_1^\star-x^\star\|$, hence the upper bound $B$ on its norm must satisfy $B \ge \mu_1 \|x_1^\star-x^\star\|$. Similarly, we must have $B \ge \mu_2 \|x_2^\star-x^\star\|$ (also showing that the set of potential minimizers is bounded). Combining with $\|x_1^\star-x^\star\|+ \|x_2^\star-x^\star\| \ge\|x_1^\star-x_2^\star\|$ (triangle inequality), these two inequalities imply that 
\begin{equation} \label{boundB} B\geq \tfrac{\mu_1\mu_2}{\mu_1+\mu_2}\|x_1^\star-x_2^\star\| .\end{equation}

Define the focal point $x_f=\tfrac{\mu_1}{\mu_1+\mu_2}x^\star_1+\tfrac{\mu_1}{\mu_1+\mu_2}x^\star_2$, easily seen to be the minimizer of the sum of $f_1(x)=\tfrac{\mu_1}{2}\|x-x_1^\star\|^2$ and $f_2(x)=\tfrac{\mu_2}{2}\|x-x_2^\star\|^2$. Since we also have $\|\nabla f_1(x_f)\| = \tfrac{\mu_1\mu_2}{\mu_1+\mu_2}\|x_1^\star-x_2^\star\|$, the bound \eqref{boundB} is necessary and sufficient for the set of minimizers to be nonempty.





\smallskip We believe the following characterization is easier to state and slightly more complete than the one proposed in \cite{kuwaranancharoen2023minimizer}.

\smallskip \begin{proposition}\label{Prop.3}
Let $\mu_1, \mu_2 \ge 0$ such that $\mu_1+\mu_2>0$. Given two points $x_1^\star\neq x^\star_2 \in \mathbb{R}^n$ and a bound $B$ satisfying \eqref{boundB}, a point $x^\star \in \mathbb{R}^n$ is a minimizer of the sum $f_1(x)+f_2(x)$ for some functions $f_1\in\mathcal{M}_{\mu_1,\infty}(x^\star_1)$, $f_2\in\mathcal{M}_{\mu_2,\infty}(x^\star_2)$ with $\|g\| \le B$ for some $g\in \partial f_1(x^\star)$
if and only if 
it satisfies 
\[ \mu_1\|x^\star-x^\star_1\|\leq B \text{ and } \mu_2\|x^\star-x^\star_2\|\leq B \] 
and at least an additional inequality among the following:
\begin{align*}
&(i)\ 
  \mu_1 \langle x_1^\star-x^\star, x^\star-x^\star_2 \rangle \geq \mu_2 \| x^\star-x^\star_2 \|^2\\
 & (ii)\  
    \mu_2 \langle x_2^\star-x^\star, x^\star-x^\star_1 \rangle \geq \mu_1 \| x^\star-x^\star_1 \|^2\\
  & (iii)\
    \det\left(M(x^\star, x^\star_1, x^\star_2, B^2)\right)\geq 0.
\end{align*}
\end{proposition}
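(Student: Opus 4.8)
The plan is to reduce the statement, through Lemma~\ref{Lem1} and the first-order optimality condition, to a single feasibility question for one vector, and then to resolve that question by a minimum-norm (active-set) analysis that produces exactly the three inequalities (i)--(iii).

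First I would specialize Lemma~\ref{Lem1} to $L=\infty$: with the convention $\tfrac{a}{\infty}=0$, condition~\eqref{int} collapses to $\langle g,x-x^\star\rangle\ge\mu\|x-x^\star\|^2$. Combining this (applied to $f_1$ at $x_1^\star$ and to $f_2$ at $x_2^\star$) with the optimality condition $g_1+g_2=0$ and the norm bound $\|g_1\|\le B$, and writing $g:=g_1=-g_2$, shows that $x^\star$ is a potential minimizer if and only if the system
\begin{align*}
&\langle g,\, x^\star-x_1^\star\rangle \ge \mu_1\|x^\star-x_1^\star\|^2,\\
&\langle g,\, x^\star-x_2^\star\rangle \le -\mu_2\|x^\star-x_2^\star\|^2,\\
&\|g\|\le B
\end{align*}
has a solution $g$. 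Geometrically, $g$ must lie in the intersection of two half-spaces $H_1,H_2$ and the ball of radius $B$ about the origin, so the task is to decide when this triple intersection is nonempty.

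Next I would observe that the triple intersection is nonempty iff the point of smallest norm in $H_1\cap H_2$ lies in the ball, and locate that point by a KKT/active-set argument according to which half-space constraints are active. If only the first is active, the candidate is $\mu_1(x^\star-x_1^\star)$, which belongs to $H_2$ exactly when (i) holds and has norm $\mu_1\|x^\star-x_1^\star\|$; if only the second is active, the candidate is $-\mu_2(x^\star-x_2^\star)$, lying in $H_1$ exactly when (ii) holds, with norm $\mu_2\|x^\star-x_2^\star\|$. If both are active, writing $a=x^\star-x_1^\star$, $b=x^\star-x_2^\star$, $c_1=\mu_1\|a\|^2$, $c_2=\mu_2\|b\|^2$, the minimum-norm point of the affine set $\{g:\langle g,a\rangle=c_1,\ \langle g,-b\rangle=c_2\}$ is $g^\star=\lambda_1 a-\lambda_2 b$ with $(\lambda_1,\lambda_2)$ solving the $2\times2$ system $G\lambda=(c_1,c_2)^{\top}$, $G=\bigl(\begin{smallmatrix}\|a\|^2 & -\langle a,b\rangle\\ -\langle a,b\rangle & \|b\|^2\end{smallmatrix}\bigr)$; then $\|g^\star\|^2=(c_1,c_2)G^{-1}(c_1,c_2)^{\top}$, and the requirement $\|g^\star\|^2\le B^2$ turns, after multiplying through by $\det G>0$ and using the bordered-determinant (Schur complement) identity, into precisely the determinant inequality (iii). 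Assembling the three cases gives both directions: feasibility forces the two norm conditions (necessary even for each $H_i$ to meet the ball) and places the minimum-norm point into one of the three active configurations, hence at least one of (i)--(iii); conversely each of (i)/(ii)/(iii) together with the relevant norm bound exhibits an explicit feasible $g$.

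I expect the both-active case to be the main obstacle, for two reasons. First, one must verify carefully that the bordered determinant in (iii) is genuinely equivalent to $\|g^\star\|^2\le B^2$, which requires the Schur-complement identity $\det M=\det G\,(B^2-\|g^\star\|^2)$ relating the matrix $M$ of (iii) to the Gram matrix $G$, together with the strict positivity $\det G>0$. Second, one must treat the degenerate configurations where $a$ and $b$ are collinear, so that $G$ is singular: here I would check separately that the antiparallel case is always captured by (i) or (ii), while the parallel case (for which $H_1\cap H_2=\emptyset$ under $\mu_1+\mu_2>0$) yields an infeasible system together with a negative value of the determinant in (iii), so that the stated characterization remains correct with its sign convention.
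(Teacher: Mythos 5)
Your proposal is correct and takes essentially the same approach as the paper's proof: both reduce the question via Lemma~\ref{Lem1} and first-order optimality to whether the minimum-norm point of the two half-spaces lies in the ball of radius $B$, enumerate the active-set (KKT) cases to obtain the norm bounds with (i), (ii), and the Gram-matrix determinant condition (iii), and handle the collinear configurations separately. The paper merely phrases the both-active case as a small semidefinite program whose optimal value matches the norm-minimization problem~\eqref{CQP}, which is the same Schur-complement/bordered-determinant argument you outline.
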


where matrix $M(x^\star, x^\star_1, x^\star_2, \alpha)$ be given as follows
$$
\begin{pmatrix}
    1 & \tfrac{\langle x^\star-x_1^\star, x^\star-x^\star_2 \rangle}{\|x^\star-x^\star_1\| \|x^\star-x^\star_2\|} & \mu_1\|x^\star-x^\star_1\| 
    \\
  \tfrac{\langle x^\star-x_1^\star, x^\star-x^\star_2 \rangle}{\|x^\star-x^\star_1\| \|x^\star-x^\star_2\|} & 1 & -\mu_2\|x^\star-x^\star_2\|   
   \\
    \mu_1\|x^\star-x^\star_1\| & -\mu_2\|x^\star-x^\star_2\| & \alpha
\end{pmatrix}.
$$

\smallskip \begin{proof}
Using Lemma \ref{Lem1}, we see that $x^\star$ is a potential minimizer if and only if the optimal value of the following convex problem is less than or equal to $B^2$,
\begin{align}\label{CQP}
 \min\ \|g\|^2\ \st\ &\ \langle g, x^\star_1-x^\star \rangle \leq -\mu_1 \| x^\star-x^\star_1 \|^2 \\
\nonumber  & \ \langle g, x^\star-x^\star_2 \rangle \leq -\mu_2 \| x^\star-x^\star_2 \|^2
\end{align}
   It is seen that the problem has a unique solution. Due to the optimality condition, the feasible point $g$ is the optimal solution of problem \eqref{CQP} iff there exist $\lambda_1, \lambda_2\geq 0$ with 
\begin{align*}
   &  g=\lambda_1(x^\star-x^\star_1)+\lambda_2(x_2^\star-x^\star), \\
   & \lambda_1\left(\langle g, x^\star_1-x^\star \rangle+\mu_1 \| x^\star-x^\star_1 \|^2\right)=0\\
 &  \lambda_2\left(\langle g, x^\star-x^\star_2 \rangle+\mu_2 \| x^\star-x^\star_2 \|^2\right)=0.
\end{align*}
  As $x_1^\star\neq x^\star_2$, the optimal value cannot be zero, and at least one constraint is active at the optimal solution. 
    If the first constraint is active, the optimal value is less that or equal to $B^2$ iff 
  $$
  \mu_1\|x^\star-x^\star_1\|\leq B,
  \mu_1 \langle x_1^\star-x^\star, x^\star-x^\star_2 \rangle \geq \mu_2 \| x^\star-x^\star_2 \|^2.
  $$
  The case that the second is active is shown analogously. Now, we consider the case that both constraints are active. Without loss of generality, we assume that $x^\star-x^\star_1$ and $x^\star-x^\star_2$ are not parallel. The case that these vectors are parallel is covered by the former cases. Consider the Gram matrix composed of  $x^\star-x^\star_1$, $x^\star-x^\star_2$ and $g$. In this case, problem \eqref{CQP} and the following semi-definite program share the same optimal value,
  \begin{equation} \label{SDP} \min \ \alpha \  \st  \ M(x^\star, x^\star_1, x^\star_2, \alpha)\succeq 0.
\end{equation}
Note that we factor $\|x^\star-x^\star_1\|^2$ and $\|x^\star-x^\star_2\|^2$ in problem \eqref{SDP} as $x^\star\neq x_1^\star, x_2^\star$.  The optimal value of semi-definite program  \eqref{SDP} is less than or equal to $B^2$ iff 
$ \det\left(M(x^\star, x^\star_1, x^\star_2, B^2)\right)\geq 0$. This follows from the fact that a symmetric matrix is positive definite iff its leading principal minors are positive. To complete the proof, we need to show that if  $x^\star-x^\star_1$ and $x^\star-x^\star_2$ are parallel and $\det(M(x^\star, x^\star_1, x^\star_2, B^2)\geq 0$, then $x^\star$ is a potential minimizer. In this case, we have 
$$
\det\left(M(x^\star, x^\star_1, x^\star_2, \alpha)\right)=-\|\mu_1(x^\star-x^\star_1)+\mu_2(x^\star-x^\star_2)\|^2,
$$
which implies $x^\star=x_f$ and the proof is complete. 
\end{proof}

\smallskip Closedness of the potential set of minimizers under the assumptions of Proposition \ref{Prop.3} follows from closedness of the intersection of the feasible region of \eqref{CQP} and $\|g\|\leq B$. 
Figure \ref{fig.P3} shows the set of potential minimizer $x^\star$ from Proposition \ref{Prop.3} for two different sets of parameters in $\mathbb{R}^2$ (colors green and red correspond to the first two inequalities, color blue corresponds to the third determinant condition). Note that a potential minimizer may satisfy more than one system among the three systems given in Proposition \ref{Prop.3}.

\begin{figure}
    \centering
    \subfigure[$\mu_1=1.75,\mu_2=2, B=3$.]
    {
      \includegraphics[height=1.5in, width=1.5in]{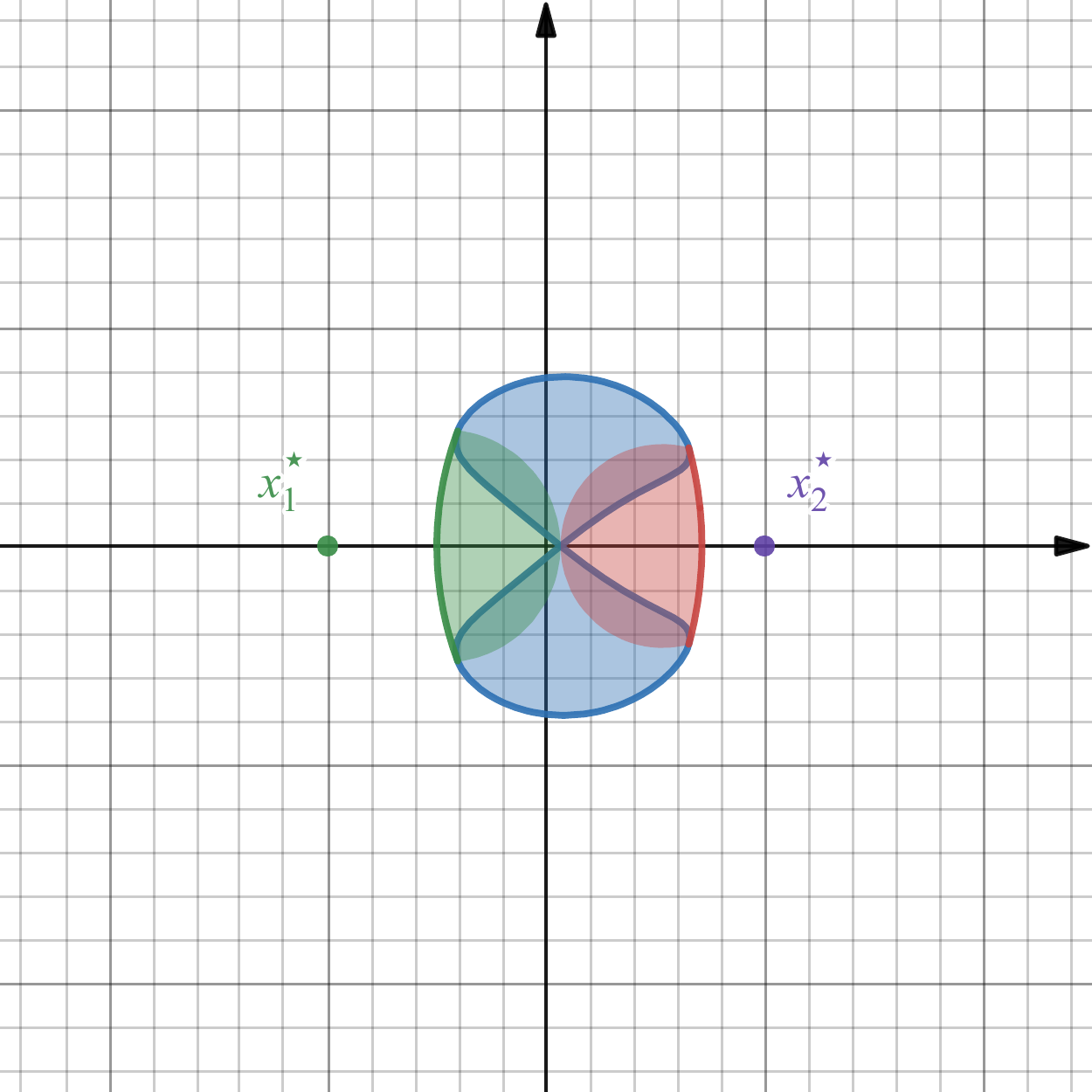}  
    }
    \subfigure[$\mu_1=1.75,\mu_2=0, B=3$.]
    {
    \includegraphics[height=1.5in, width=1.5in]{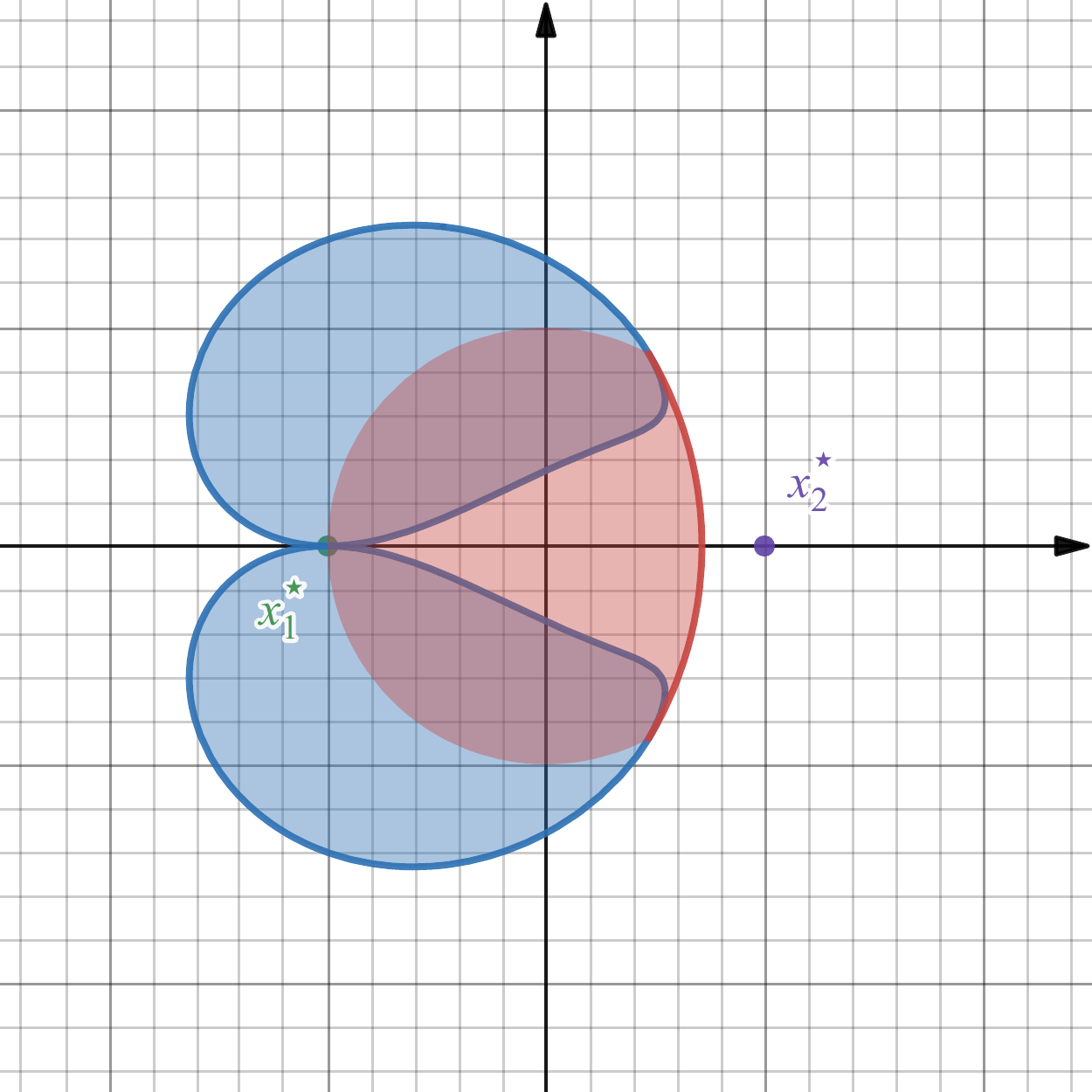}
    }
    \caption{Sets of potential minimizers $x^\star$  from Proposition \ref{Prop.3} 
    with $x_1^\star = (-1, 0)$ and $x_2^\star = (1, 0)$.}
    \label{fig.P3}
\end{figure}

\bigskip Our last theorem bounds the distance between any minimizer in Proposition~\ref{Prop.3} and the focal point $x_f$. 

\smallskip\begin{theorem}\label{T.1}
Any minimizer $x^*$ verifying the assumptions of Proposition~\ref{Prop.3} 
must satisfy
\begin{align*} 
& \nonumber \left\|x^\star-x_f\right\|^2\leq \min\big(\tfrac{B}{\mu_1+\mu_2} \left\|x_1^\star-x_2^\star\right\|-\tfrac{\mu_1\mu_2}{(\mu_1+\mu_2)^2} \\
& \hspace{3cm}  \left\|x_1^\star-x_2^\star\right\|^2, \tfrac{B^2}{(\mu_1+ \mu_2)^2} \big).
\end{align*} 
\end{theorem}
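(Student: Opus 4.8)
The plan is to extract from Lemma~\ref{Lem1} (specialized to $L=\infty$) the three scalar facts governing any such minimizer $x^\star$, and then obtain each of the two bounds by a short Cauchy--Schwarz argument. Writing $g\in\partial f_1(x^\star)$ for a subgradient with $\|g\|\le B$, optimality of $x^\star$ for the sum forces $-g\in\partial f_2(x^\star)$, and Lemma~\ref{Lem1} with $L_1=L_2=\infty$ then gives
\[ \langle g,\, x^\star-x_1^\star\rangle \ge \mu_1\|x^\star-x_1^\star\|^2, \qquad \langle -g,\, x^\star-x_2^\star\rangle \ge \mu_2\|x^\star-x_2^\star\|^2 . \]
I would abbreviate $d_1=x^\star-x_1^\star$ and $d_2=x^\star-x_2^\star$, so that $d_1-d_2=x_2^\star-x_1^\star$ and, by the definition of the focal point, $x^\star-x_f=\tfrac{1}{\mu_1+\mu_2}(\mu_1 d_1+\mu_2 d_2)$. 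Everything then reduces to bounding $\|\mu_1 d_1+\mu_2 d_2\|$.

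For the first argument of the minimum, I would add the two inequalities above to get $\langle g,\, d_1-d_2\rangle \ge \mu_1\|d_1\|^2+\mu_2\|d_2\|^2$, and bound the left-hand side by Cauchy--Schwarz together with $\|g\|\le B$, yielding $\mu_1\|d_1\|^2+\mu_2\|d_2\|^2 \le B\|x_1^\star-x_2^\star\|$. The bound then follows from the identity $\|\mu_1 d_1+\mu_2 d_2\|^2+\mu_1\mu_2\|d_1-d_2\|^2=(\mu_1+\mu_2)(\mu_1\|d_1\|^2+\mu_2\|d_2\|^2)$, which (after multiplying the previous inequality by $\mu_1+\mu_2$ and dividing by $(\mu_1+\mu_2)^2$) rearranges into exactly $\|x^\star-x_f\|^2\le\tfrac{B}{\mu_1+\mu_2}\|x_1^\star-x_2^\star\|-\tfrac{\mu_1\mu_2}{(\mu_1+\mu_2)^2}\|x_1^\star-x_2^\star\|^2$.

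For the second argument I would instead form the combination $\mu_1\times(\text{first})+\mu_2\times(\text{second})$, giving $\langle g,\, \mu_1 d_1-\mu_2 d_2\rangle \ge \mu_1^2\|d_1\|^2+\mu_2^2\|d_2\|^2$, a nonnegative quantity. Cauchy--Schwarz applied to the vector $\mu_1 d_1-\mu_2 d_2$ then yields $B^2\|\mu_1 d_1-\mu_2 d_2\|^2\ge \langle g,\, \mu_1 d_1-\mu_2 d_2\rangle^2\ge(\mu_1^2\|d_1\|^2+\mu_2^2\|d_2\|^2)^2$ (the last step squaring a nonnegative lower bound). It then remains to use the difference-of-squares identity $\|\mu_1 d_1-\mu_2 d_2\|^2\,\|\mu_1 d_1+\mu_2 d_2\|^2=(\mu_1^2\|d_1\|^2+\mu_2^2\|d_2\|^2)^2-4\mu_1^2\mu_2^2\langle d_1,d_2\rangle^2$, which shows $(\mu_1^2\|d_1\|^2+\mu_2^2\|d_2\|^2)^2\ge\|\mu_1 d_1-\mu_2 d_2\|^2\,\|\mu_1 d_1+\mu_2 d_2\|^2$; combining the two displays gives $B^2\ge\|\mu_1 d_1+\mu_2 d_2\|^2=(\mu_1+\mu_2)^2\|x^\star-x_f\|^2$, which is the second argument of the minimum.

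The main obstacle is the second bound, where one must guess the right test vector $\mu_1 d_1-\mu_2 d_2$ (rather than $\mu_1 d_1+\mu_2 d_2$, whose inner product with $g$ is not controlled because the two hypotheses bound $\langle g,d_1\rangle$ and $\langle g,d_2\rangle$ from opposite sides) and recognize the difference-of-squares identity that lets Cauchy--Schwarz close the gap. One also has to dispose of the degenerate case $\mu_1 d_1-\mu_2 d_2=0$ separately, since the argument divides by $\|\mu_1 d_1-\mu_2 d_2\|$: in that case the lower bound forces $\mu_1^2\|d_1\|^2+\mu_2^2\|d_2\|^2=0$, hence, since $\mu_1+\mu_2>0$, also $\mu_1 d_1+\mu_2 d_2=0$, i.e.\ $x^\star=x_f$, and both bounds hold trivially.
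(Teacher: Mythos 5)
Your proof is correct, and for the second bound it takes a genuinely different route from the paper. Writing $d_i=x^\star-x_i^\star$: for the first bound both arguments are essentially the same --- sum the two subgradient inequalities from Lemma~\ref{Lem1} (with $L=\infty$), apply Cauchy--Schwarz with $\|g\|\le B$, and convert $\mu_1\|d_1\|^2+\mu_2\|d_2\|^2$ into $\|x^\star-x_f\|^2$; you do this coordinate-free via the identity $\|\mu_1 d_1+\mu_2 d_2\|^2+\mu_1\mu_2\|d_1-d_2\|^2=(\mu_1+\mu_2)(\mu_1\|d_1\|^2+\mu_2\|d_2\|^2)$, while the paper places $x_f$ at the origin by a special choice of coordinates and uses the resulting relation \eqref{Pr1_1}. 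For the second bound the paper instead writes down a sum-of-squares certificate: it multiplies $B^2-\|g\|^2\ge 0$ and the two subgradient inequalities by the multipliers $\gamma$, $2\mu_1\gamma$, $2\mu_2\gamma$ with $\gamma=(\mu_1+\mu_2)^{-2}$, and verifies that the sum equals $\gamma B^2-\|x^\star\|^2$ minus a complete square; this needs no case distinction, but the multipliers and the square must be guessed (or extracted from an SDP) and checked by expansion. Your route --- lower-bounding $\langle g,\mu_1 d_1-\mu_2 d_2\rangle$ by the nonnegative quantity $\mu_1^2\|d_1\|^2+\mu_2^2\|d_2\|^2$, applying Cauchy--Schwarz to the test vector $\mu_1 d_1-\mu_2 d_2$, and closing with the identity $\|a-b\|^2\|a+b\|^2=(\|a\|^2+\|b\|^2)^2-4\langle a,b\rangle^2$ for $a=\mu_1 d_1$, $b=\mu_2 d_2$ --- uses only standard inequalities and is arguably more transparent, at the price of the separate degenerate case $\mu_1 d_1=\mu_2 d_2$, which you handle correctly (it forces $x^\star=x_f$, and both sides of the claimed bound are then nonnegative thanks to \eqref{boundB}). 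One phrasing caveat, shared with the paper's own proof: for a \emph{fixed} $g\in\partial f_1(x^\star)$ with $\|g\|\le B$, optimality of the sum does not by itself force $-g\in\partial f_2(x^\star)$ when the subdifferentials are not singletons; the intended reading of Proposition~\ref{Prop.3} (see the discussion preceding it) is that the norm bound applies to the subgradient pair certifying optimality, and that is exactly the triple your three scalar facts require, so nothing is lost.
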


\smallskip\begin{proof} 
We may assume w.l.o.g. that $x^\star_1=\tfrac{\mu_2\left\|x_1^\star-x_2^\star\right\|}{\mu_1+\mu_2}e_1$ and $x^\star_2=-\tfrac{\mu_1\left\|x_1^\star-x_2^\star\right\|}{\mu_1+\mu_2}e_1$, 
which implies that $x_f=0$ and  
\begin{align}\label{Pr1_1}
\mu_1\left\|x_1^\star\right\|^2+\mu_2\left\|x_2^\star\right\|^2=
 \tfrac{\mu_1\mu_2}{\mu_1+\mu_2}\left\|x_1^\star-x_2^\star\right\|^2.
\end{align}

\noindent First, we establish 
$$
\left\|x^\star-x_f\right\|^2\leq \tfrac{B}{\mu_1+\mu_2} \left\|x_1^\star-x_2^\star\right\|-\tfrac{\mu_1\mu_2}{(\mu_1+\mu_2)^2} \left\|x_1^\star-x_2^\star\right\|^2.
$$ 
By \eqref{int}, we have 
\begin{align*}
\langle g, x^\star-x^\star_1\rangle \geq \mu_1 \left\|x^\star-x^\star_1\right\|^2,\\
\langle -g, x^\star-x^\star_2\rangle \geq \mu_2 \left\|x^\star-x^\star_2\right\|^2.
\end{align*}
By summing these inequalities, we get
\begin{align*}
\langle g, x^\star_2-x^\star_1\rangle \geq \mu_1 \left\|x_1^\star\right\|^2+\mu_2 \left\|x_2^\star\right\|^2
+(\mu_1+\mu_2)\left\|x^\star\right\|^2.
\end{align*}
and we conclude with Cauchy–Schwarz inequality and \eqref{Pr1_1}
$$
(\mu_1+\mu_2)\left\|x^\star\right\|^2\leq 
{B} \left\|x_1^\star-x_2^\star\right\|-\tfrac{\mu_1\mu_2}{\mu_1+\mu_2} \left\|x_1^\star-x_2^\star\right\|^2.
$$

Now, we prove that $\|x^\star-x_f\|^2\leq  \tfrac{B^2}{(\mu_1+ \mu_2)^2}$.
Suppose that $\gamma= \tfrac{1}{(\mu_1+ \mu_2)^2}$. Consider the following inequalities,
\begin{align*}
& \gamma\left( B^2-\|g\|^2 \right)\geq 0,\\
& 2\mu_1\gamma\left( \langle g, x^\star-x^\star_1\rangle - \mu_1 \left\|x^\star-x^\star_1\right\|^2 \right)\geq 0,\\
& 2\mu_2\gamma\left( \langle -g, x^\star-x^\star_2\rangle - \mu_2 \left\|x^\star-x^\star_2\right\|^2 \right)\geq 0.
\end{align*}
By summing these inequalities, we get
$$
\gamma B^2-\left\|x^\star\right\|^2-\gamma\left\| \mu_1 x_1^\star-\mu_2 x_2^\star+(\mu_2-\mu_1)x^\star+g \right\|^2\geq 0,
$$
which implies the desired inequality.
\end{proof}
 
\section{Minimizers of a sum of $m$ convex functions}\label{Sec.2}
We now extend the results of Section \ref{Sec.1} to sums of more than two functions,  
and start with the smooth case.

\begin{theorem}\label{Theo.1}
Let $0 \le \mu_i < L_i < \infty$ for all $i\in [m]$. Given $m$ points $x_i^\star \in \mathbb{R}^n$, $i\in [m]$, a point $x^\star \in \mathbb{R}^n$ is a minimizer of the sum $\sum_{i=1}^m f_i(x)$ for some functions $f_i\in\mathcal{M}_{\mu_i,L_i}(x^\star_i)$, $i\in [m]$ if and only if
\begin{equation}\label{Th1.inq}
\big\| \sum_{i\in [m]} (L_i+\mu_i)(x^\star-x_i^\star) \big\|\leq 
\sum_{i\in [m]} (L_i-\mu_i)\| x^\star-x^\star_i \|.
\end{equation}
\end{theorem}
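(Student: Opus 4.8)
The plan is to mirror the three-step argument used for Proposition~\ref{Prop.1}, replacing the two-ball intersection step by a Minkowski-sum argument that works for arbitrarily many balls. First I would invoke the first-order optimality condition for the sum: a point $x^\star$ minimizes $\sum_{i\in[m]} f_i$ if and only if there exist subgradients $g_i\in\partial f_i(x^\star)$, $i\in[m]$, with $\sum_{i\in[m]} g_i=0$. This relies on the standard fact that the subdifferential of a sum of proper convex functions (here with full domain) is the sum of the individual subdifferentials.

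Next, since every $L_i<\infty$, I would apply Lemma~\ref{Lem1} in its geometric form~\eqref{eq:geom} to each summand: the existence of $f_i\in\mathcal{M}_{\mu_i,L_i}(x_i^\star)$ with $g_i\in\partial f_i(x^\star)$ is equivalent to $g_i$ lying in the closed ball $B_i$ centered at $c_i=\tfrac{L_i+\mu_i}{2}(x^\star-x_i^\star)$ with radius $r_i=\tfrac{L_i-\mu_i}{2}\|x^\star-x_i^\star\|$. Because the $f_i$ are chosen independently, $x^\star$ is a potential minimizer exactly when one can select $g_i\in B_i$ for each $i$ with $\sum_i g_i=0$, i.e.\ precisely when $0$ belongs to the Minkowski sum $\sum_{i\in[m]} B_i$.

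The crux is then a purely geometric identity: the Minkowski sum of these balls is again a ball, namely $\sum_{i\in[m]} B_i = B\big(\sum_i c_i,\, \sum_i r_i\big)$. The inclusion $\subseteq$ is immediate from the triangle inequality, since $y=\sum_i y_i$ with $\|y_i-c_i\|\le r_i$ gives $\|y-\sum_i c_i\|\le\sum_i r_i$. For the reverse inclusion I would write any displacement $d$ with $\|d\|\le\sum_i r_i$ as $d=\sum_i d_i$ via the proportional split $d_i=\tfrac{r_i}{\sum_j r_j}\,d$, so that $\|d_i\|\le r_i$; the degenerate case $\sum_j r_j=0$ forces every ball to reduce to a point and is handled directly. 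I expect this decomposition to be the only step requiring any care, and it is elementary. With the identity in hand, $0\in\sum_i B_i$ becomes $\|\sum_i c_i\|\le\sum_i r_i$, and multiplying through by $2$ yields exactly inequality~\eqref{Th1.inq}. For $m=2$ this recovers the distance-between-centers condition of Proposition~\ref{Prop.1}, confirming consistency.
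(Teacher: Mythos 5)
Your proposal is correct, and it diverges from the paper's proof at the decisive step. Both arguments start identically: first-order optimality (there exist $g_i\in\partial f_i(x^\star)$ with $\sum_{i\in[m]} g_i=0$), then Lemma~\ref{Lem1} in its geometric form~\eqref{eq:geom}, reducing the question to whether one can pick points $g_i$, one from each ball $B_i$ centered at $c_i=\tfrac{L_i+\mu_i}{2}(x^\star-x_i^\star)$ with radius $r_i=\tfrac{L_i-\mu_i}{2}\|x^\star-x_i^\star\|$, summing to zero. The paper resolves this by sequential elimination: it substitutes $g_m=-\sum_{i\in[m-1]}g_i$ and repeatedly applies the two-ball intersection criterion (distance between centers at most the sum of the radii) to eliminate $g_{m-1}$, then $g_{m-2}$, and so on, closing with an induction whose details are left to the reader. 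You instead recognize the feasibility question as the single membership statement $0\in B_1+\cdots+B_m$ (Minkowski sum) and prove the identity that this sum is itself the ball centered at $\sum_i c_i$ with radius $\sum_i r_i$; the nontrivial inclusion via the proportional split $d_i=\tfrac{r_i}{\sum_j r_j}d$ is valid, and your handling of the degenerate case $\sum_j r_j=0$ (all balls are points) is correct. Your route buys a non-inductive, one-shot argument that is moreover constructive --- the proportional split exhibits explicit subgradients $g_i=c_i+d_i$ witnessing that $x^\star$ is a minimizer --- and packages the geometry into a reusable identity. The paper's elimination, by contrast, literally iterates the $m=2$ argument of Proposition~\ref{Prop.1}, and the same pairwise-elimination template adapts directly to mixed systems where one constraint is a half-space rather than a ball (as in Proposition~\ref{Prop.2}), a situation the paper otherwise reaches only by the limiting argument behind Corollary~\ref{Coro.3_2}. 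Both proofs arrive at exactly inequality~\eqref{Th1.inq} after multiplying through by $2$.
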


\smallskip\begin{proof}
The proof is analogous to that of Proposition  \ref{Prop.1}. Due to the optimality conditions, $x^\star$ is a minimizer of $\sum_{i\in [m]}f_i(x)$ iff there exist $g_i\in\partial f_i(x^\star)$, $i\in [m]$ such that $\sum_{i\in[m]} g_i=0$. Hence, by Lemma \ref{Lem1}, $x^\star\in\argmin \sum_{i\in [m]}f_i$  with $f_i\in\mathcal{M}_{\mu_i, L_i}(x^\star_i)$ for $i\in [m]$ iff there exists $g_i, i \in [m]$ satisfying $\sum_{i\in[m]} g_i=0$ and \[
 \big\| g_i-\tfrac{L_i+\mu_i}{2}(x^\star-x_i^\star) \big\|\leq
\tfrac{L_i-\mu_i}{2}\| x^\star-x^\star_i \|,\ i\in [m]\]
Below we use $L^+_i = \frac{L_i+\mu_i}{2}$ and $L^-_i = \frac{L_i-\mu_i}{2}$ for convenience.
Replacing $g_m$ with $-\sum_{i\in[m-1]} g_i$, we obtain
\begin{align*}
& \big\| g_i- L^+_i (x^\star-x_i^\star) \big\|\leq
L^-_i\| x^\star-x^\star_1 \|,\ i\in [m-1]\\
& \big\| \textstyle\sum_{i\in [m-1]} g_i+L^+_m(x^\star-x_m^\star) \big\|\leq
L^-_m\| x^\star-x^\star_m \|.
\end{align*}
We check the consistency of this system by an elimination method. We first investigate the solvability with respect to $g_{m-1}$ while $g_1, \dots, g_{m-2}$ are fixed. As $g_{m-1}$ only appears in the following two inequalities, defining two balls
\begin{align*}
& \big\| g_{m-1}-L^+_{m-1}(x^\star-x_{m-1}^\star) \big\|\leq
L^-_{m-1}\| x^\star-x^\star_{m-1} \|\\
& \big\| \textstyle g_{m-1} + \sum_{i\in [m-2]} g_i+L^+_{m}(x^\star-x_m^\star) \big\|\leq
L^-_{m}\| x^\star-x^\star_m \|,
\end{align*}
the system has a solution in terms of $g_{m-1}$ iff distance between the two ball centers is not greater than the sum of the two radii, leading to the inequality
\begin{align*}
& \big\| \textstyle\sum_{i\in [m-2]} g_i+L^+_m(x^\star-x_m^\star)+L^+_{m-1}(x^\star-x_{m-1}^\star) \big\|\\
\leq &\ L^-_m \| x^\star-x^\star_m \|+L^-_{m-1}\| x^\star-x^\star_{m-1} \|.
\end{align*}
Consider now at $g_{m-2}$, which must again lie in the intersection of two balls: applying the same principle leads to an inequality of the same form as above, with one more term with $L^+_{m-2}$ inside of the left-hand side norm, and one more norm term with $L^-_{m-2}$ on the right-hand side.
A simple induction argument then completes the proof.
\end{proof}

\bigskip The potential minimizer set in Theorem \ref{Theo.1} is closed and bounded set if $\sum_{i\in [m]} \mu_i>0$. If $x^\star_i$'s are distinct, inequality \eqref{Th1.inq} is satisfied strictly for the focal point $x_f=\tfrac{1}{\sum_{i\in [m]} L_i+\mu_i}\sum_{i\in [m]} (L_i+\mu_i) x^\star_i$, and the interior of the minimizer set is non-empty.  

\bigskip We now consider the case where (exactly) one summand  is not smooth, using a limiting argument. Rewriting \eqref{Th1.inq} as 
\begin{align}\label{Cor1.inq}
\textstyle \nonumber & \textstyle \frac12\sum_{i\in [m]} \mu_i L_i\| x^\star-x_i^\star\|^2 + \sum_{\substack{i<j}} L^+_i L^+_j \langle x^\star-x_i^\star,  x^\star-x_j^\star \rangle
\\
 & \textstyle \leq \textstyle \sum_{\substack{i<j, i, j\in [m]}} L^-_i L^-_j \| x^\star-x^\star_i\| \| x^\star-x^\star_j\|
\end{align}
dividing both sides by $L_m$ and taking the limit as $L_m\to\infty$, we obtain the inequality of the following Corollary.

\smallskip \begin{corollary}\label{Coro.3_2}
Let $0 \le \mu_i < L_i < \infty$ for all $i\in [m-1]$ and $0 \le \mu_m < L_m = \infty$. Given $m$ points $x_i^\star \in \mathbb{R}^n$, a point $x^\star \in \mathbb{R}^n$ is a minimizer of the sum $\sum_{i=1}^m f_i(x)$ for some functions $f_i\in\mathcal{M}_{\mu_i,L_i}(x^\star_i)$, $i\in [m]$ if and only if
\begin{align*}
 &  \sum_{i\in [m-1]} \tfrac{L_i+\mu_i}{2} \langle x^\star-x_i^\star, x^\star-x_m^\star \rangle + \mu_m\| x^\star-x_m^\star\|^2  \\
\leq  & \sum_{i\in [m-1]} \tfrac{L_i-\mu_i}{2}\| x^\star-x^\star_i\| \| x^\star-x^\star_m\|.
\end{align*}
\end{corollary}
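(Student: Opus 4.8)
The plan is to mirror the optimality-condition argument of Proposition~\ref{Prop.2} and Theorem~\ref{Theo.1}, isolating the single non-smooth summand. First I would invoke the optimality condition for the sum: $x^\star$ minimizes $\sum_{i\in[m]} f_i$ if and only if there exist $g_i\in\partial f_i(x^\star)$ with $\sum_{i\in[m]} g_i=0$. Applying Lemma~\ref{Lem1} summand by summand, the $m-1$ smooth functions yield the geometric ball constraints~\eqref{eq:geom}
\[
\big\| g_i-\tfrac{L_i+\mu_i}{2}(x^\star-x_i^\star)\big\|\le \tfrac{L_i-\mu_i}{2}\|x^\star-x_i^\star\|,\quad i\in[m-1],
\]
while for the non-smooth summand, the convention $\tfrac{a}{\infty}=0$ collapses condition~\eqref{int} with $L_m=\infty$ to the single linear inequality $\langle g_m, x^\star-x_m^\star\rangle\ge \mu_m\|x^\star-x_m^\star\|^2$. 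Eliminating $g_m=-\sum_{i\in[m-1]}g_i$, I am left with $m-1$ ball constraints on $g_1,\dots,g_{m-1}$ together with the half-space constraint $\langle \sum_{i\in[m-1]}g_i,\,x^\star-x_m^\star\rangle\le -\mu_m\|x^\star-x_m^\star\|^2$ on their sum.

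The key step is to observe that the set of attainable values of $s:=\sum_{i\in[m-1]}g_i$, as each $g_i$ independently ranges over its ball, is precisely the Minkowski sum of those balls, which for Euclidean balls is again a ball, centered at $C=\sum_{i\in[m-1]}\tfrac{L_i+\mu_i}{2}(x^\star-x_i^\star)$ with radius $R=\sum_{i\in[m-1]}\tfrac{L_i-\mu_i}{2}\|x^\star-x_i^\star\|$. Equivalently, this collapse can be carried out one variable at a time by the same elimination-and-induction device used in the proof of Theorem~\ref{Theo.1}, each step replacing two balls by a single ball with summed center and summed radius. The solvability of the full system thus reduces to whether this single ball meets the half-space $\{s:\langle s, x^\star-x_m^\star\rangle\le -\mu_m\|x^\star-x_m^\star\|^2\}$.

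Finally I would apply the ball--half-space intersection criterion already used in Proposition~\ref{Prop.2}: the minimum of $s\mapsto\langle s, x^\star-x_m^\star\rangle$ over a ball of center $C$ and radius $R$ equals $\langle C, x^\star-x_m^\star\rangle - R\,\|x^\star-x_m^\star\|$, so the intersection is nonempty if and only if this value does not exceed $-\mu_m\|x^\star-x_m^\star\|^2$. Substituting the expressions for $C$ and $R$ and rearranging produces exactly the stated inequality; this also serves as a rigorous confirmation of the formal $L_m\to\infty$ limit of \eqref{Cor1.inq} sketched before the statement. The degenerate case $x^\star=x_m^\star$ is handled directly, since both sides of the claimed inequality then vanish and the half-space constraint becomes $0\le 0$.

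The step I expect to be the main obstacle is the \emph{key step}: justifying cleanly that summing the individual ball constraints is equivalent to a single ball constraint on $s$ with summed center and radius. Either route requires care---the Minkowski-sum identity must be stated with the correct direction (every point of the sum ball is attainable, and only those), or, if one prefers the inductive elimination of Theorem~\ref{Theo.1}, one must confirm that introducing the half-space constraint only at the last step does not interact with the intermediate ball reductions. Once this reduction is secured, the remaining algebra is the same routine computation as in Proposition~\ref{Prop.2}.
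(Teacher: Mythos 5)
Your proof is correct, but it takes a genuinely different route from the paper. The paper never argues the corollary directly: it squares \eqref{Th1.inq} to obtain \eqref{Cor1.inq}, divides by $L_m$, and takes the formal limit $L_m\to\infty$, presenting the resulting inequality as the characterization. You instead generalize the proof of Proposition~\ref{Prop.2} to $m$ summands: optimality conditions, Lemma~\ref{Lem1} applied with finite $L_i$ (balls, via \eqref{eq:geom}) and with $L_m=\infty$ (half-space, via \eqref{int} and the convention $\tfrac{a}{\infty}=0$), elimination of $g_m$, reduction of the $m-1$ ball constraints on $\sum_{i\in[m-1]}g_i$ to a single ball, and the ball--half-space intersection criterion. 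Your worry about the ``key step'' is unfounded: for Euclidean balls the Minkowski sum identity $B(c,r)+B(c',r')=B(c+c',r+r')$ holds exactly in both directions, since $rK+r'K=(r+r')K$ for any convex set $K$ and the unit ball is convex; alternatively, the one-variable-at-a-time elimination also goes through, each step converting a ball plus a half-space constraint into a new half-space constraint (or, as in Theorem~\ref{Theo.1}, two balls into one). Comparing the two approaches: the paper's limit argument is one line, but as written it only shows that the corollary's inequality is the pointwise limit of the smooth-case inequalities; turning that into the claimed ``if and only if'' for the class $\mathcal{M}_{\mu_m,\infty}(x_m^\star)$ still requires extra reasoning (e.g., the ``if'' direction for strict inequality follows from $\mathcal{M}_{\mu_m,L_m}\subset\mathcal{M}_{\mu_m,\infty}$ with $L_m$ large, but equality cases need a separate closure argument, and the ``only if'' direction needs Lemma~\ref{Lem1} at $L=\infty$ anyway). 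Your direct argument avoids these subtleties entirely and yields a self-contained, rigorous equivalence, at the cost of redoing the geometric elimination rather than quoting Theorem~\ref{Theo.1}.
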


\smallskip Observe that if more than one summand is not smooth, the set of potential minimizers may become very large set, as its closure may be whole space \cite{kuwaranancharoen2023minimizer}.

\bigskip In some cases, some functions are only unknown. The following proposition addresses this situation.  For the simplicity of the presentation, we assume that the known functions $f_i$, $i\in [\ell]$, are differentiable. 

\begin{proposition}\label{Por.P1}
Let $f_i\in\mathcal{F}_{0, \infty}$, $i\in [\ell]$ be $l$ known differentiable convex functions. Let $0 \le \mu_i < L_i < \infty$ for all $i\in [\ell,m]$. Given $m-l$ points $x_i^\star \in \mathbb{R}^n$, $i\in [\ell+1,m]$, a point $x^\star \in \mathbb{R}^n$ is a minimizer of the sum $\sum_{i=1}^m f_i(x)$ for some functions $f_i\in\mathcal{M}_{\mu_i,L_i}(x^\star_i)$, $i\in [\ell+1,m]$ if and only if
\begin{align*}
& \big\|  2\sum_{i\in [\ell]} \nabla f_i(x^\star) +\sum_{i\in [\ell+1, m]} (L_i+\mu_i)(x^\star-x_i^\star) \big\|\leq \\
& \ \ \ \sum_{i\in [\ell+1, m]} (L_i-\mu_i)\| x^\star-x^\star_i \|.
\end{align*}
\end{proposition}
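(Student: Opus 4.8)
The plan is to follow the proof of Theorem~\ref{Theo.1} essentially verbatim, the only new feature being that the known differentiable summands contribute a \emph{fixed} gradient rather than a free subgradient. I would first invoke the first-order optimality condition: $x^\star$ minimizes $\sum_{i\in[m]} f_i$ if and only if there exist subgradients $g_i\in\partial f_i(x^\star)$ with $\sum_{i\in[m]} g_i=0$. For the known functions ($i\in[\ell]$), differentiability forces $g_i=\nabla f_i(x^\star)$, so these are determined. For the unknown functions ($i\in[\ell+1,m]$), Lemma~\ref{Lem1} in its geometric form~\eqref{eq:geom} states that an admissible $g_i$ is exactly one lying in the ball centered at $\tfrac{L_i+\mu_i}{2}(x^\star-x_i^\star)$ with radius $\tfrac{L_i-\mu_i}{2}\|x^\star-x_i^\star\|$.

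Combining these, $x^\star$ is a potential minimizer if and only if there exist vectors $g_i$ ($i\in[\ell+1,m]$), each in its prescribed ball, whose sum equals the fixed vector $-\sum_{i\in[\ell]}\nabla f_i(x^\star)$. Equivalently, this fixed vector must lie in the Minkowski sum of the balls attached to the unknown summands. The elementary fact I would then use is that the Minkowski sum of balls $B(c_i,r_i)$ is again a single ball $B(\sum_i c_i,\sum_i r_i)$, so membership amounts to requiring that the distance from the target point to the aggregate center $\sum_i c_i$ not exceed the aggregate radius $\sum_i r_i$. Substituting $c_i=\tfrac{L_i+\mu_i}{2}(x^\star-x_i^\star)$ and $r_i=\tfrac{L_i-\mu_i}{2}\|x^\star-x_i^\star\|$, using $\|-v\|=\|v\|$, and multiplying through by $2$ yields precisely the claimed inequality.

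The only step needing genuine care is the nontrivial inclusion in the balls-sum identity, namely that every point of $B(\sum_i c_i,\sum_i r_i)$ really decomposes into summands taken from the individual balls; the opposite inclusion is an immediate triangle inequality. I would establish this exactly as in Theorem~\ref{Theo.1}, by eliminating the free subgradients one at a time: with all but one $g_i$ fixed, feasibility reduces to the nonemptiness of an intersection of two balls, which holds iff the distance between their centers is at most the sum of their radii, and a short induction then recovers the aggregate inequality. Since that elimination is already spelled out for Theorem~\ref{Theo.1}, the present argument differs only in that the known gradients $\nabla f_i(x^\star)$, $i\in[\ell]$, enter as an additional fixed offset that is absorbed into the left-hand norm; I therefore expect the main effort to be bookkeeping rather than any conceptual difficulty.
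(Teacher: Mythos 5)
Your proposal is correct and follows essentially the same route as the paper: the paper's proof also combines the optimality condition $\sum_i g_i = 0$ (with $g_i = \nabla f_i(x^\star)$ fixed for the known summands) with the geometric ball form \eqref{eq:geom} of Lemma~\ref{Lem1}, and then eliminates the free subgradients one at a time exactly as in Theorem~\ref{Theo.1}. Your Minkowski-sum-of-balls framing is just a cleaner packaging of that same elimination/induction argument, which you yourself reduce to it.
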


\smallskip\begin{proof}
By Lemma \ref{Lem1}, $x^\star$ a potential minimizer iff the following system has a solution,
\begin{align*}
& \big\| g_i-\tfrac{L_i+\mu_i}{2}(x^\star-x_i^\star) \big\|\leq
\tfrac{L_i-\mu_i}{2}\| x^\star-x^\star_1 \|, i\in [\ell+1, m]\\
& \big\| \sum_{i\in [\ell+1, m-1]} g_i+ \sum_{i\in [\ell]} \nabla f_i(x^\star)+\tfrac{L_m+\mu_m}{2}(x^\star-x_m^\star) \big\|\leq\\
& \ \tfrac{L_m-\mu_m}{2}\| x^\star-x^\star_m \|.
\end{align*}
Analogous to the proof of Theorem \ref{Theo.1}, one can infer the desired inequality and the proof is complete.  
\end{proof}

Similar to Corollary \ref{Cor1.inq}, we can characterize the potential set when $L_m=\infty$ by using Proposition \ref{Por.P1}. The following corollary states this point.

\smallskip\begin{corollary}
Let $f_i\in\mathcal{F}_{0, \infty}$, $i\in [\ell]$, be known and differentiable convex functions. Assume that $L_i<\infty$ for $i\in [\ell+1, m-1]$ and $\mu_i$ and $x^\star_i$ for $i\in [\ell+1, m]$ are given. The point $x^\star\in\argmin\sum_{i\in [m]} f_i(x)$ for some
 $f_i\in\mathcal{M}_{\mu_i, L_i}(x^\star_i)$,  $i\in [\ell+1, m-1]$,
and $f_m\in\mathcal{M}_{\mu_m, \infty}(x_m^\star)$  iff 
\begin{align*}
 &  \mu_m\| x^\star-x_m^\star\|^2+ \sum_{i\in[\ell]} \langle \nabla f(x^\star), x^\star-x_m^\star \rangle+\\
& \sum_{i\in[\ell+1, m-1]} \tfrac{L_i+\mu_i}{2} \langle x^\star-x_i^\star, x^\star-x_m^\star \rangle \leq \\
& \ \ \ \sum_{i\in[\ell+1, m-1]} \tfrac{L_i-\mu_i}{2}\| x^\star-x^\star_i\| \| x^\star-x^\star_m\|.
\end{align*}
\end{corollary}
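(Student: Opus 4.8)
The plan is to recast the question as a finite-dimensional feasibility problem via Lemma~\ref{Lem1} and then eliminate the free subgradients, exactly in the spirit of the proofs of Proposition~\ref{Prop.2} and Theorem~\ref{Theo.1}. One could instead obtain the result by dividing the inequality of Proposition~\ref{Por.P1} by $L_m$ and letting $L_m\to\infty$, mirroring how Corollary~\ref{Coro.3_2} follows from Theorem~\ref{Theo.1}; I prefer the direct route, since it avoids having to justify that the limit of the characterizations (over the nested, growing families $\mathcal{F}_{\mu_m,L_m}$) actually recovers the $L_m=\infty$ case.

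First I would combine the optimality condition with Lemma~\ref{Lem1}: the point $x^\star$ is a potential minimizer if and only if there exist subgradients $g_i$, $i\in[\ell+1,m]$, with $\sum_{i\in[\ell]}\nabla f_i(x^\star)+\sum_{i\in[\ell+1,m]}g_i=0$, where the known gradients $\nabla f_i(x^\star)$, $i\in[\ell]$, act as fixed offsets. For the smooth summands $i\in[\ell+1,m-1]$ the geometric form~\eqref{eq:geom} constrains $g_i$ to the ball $\|g_i-\tfrac{L_i+\mu_i}{2}(x^\star-x_i^\star)\|\le\tfrac{L_i-\mu_i}{2}\|x^\star-x_i^\star\|$, while for the non-smooth summand $f_m$ (with $L_m=\infty$) condition~\eqref{int} degenerates into the half-space constraint $\langle g_m,x^\star-x_m^\star\rangle\ge\mu_m\|x^\star-x_m^\star\|^2$. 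Because the functions are selected independently, the joint feasibility is equivalent to each $g_i$ satisfying its own constraint together with the sum-to-zero relation.

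Next I would eliminate the dependent variable by substituting $g_m=-\sum_{i\in[\ell]}\nabla f_i(x^\star)-\sum_{i\in[\ell+1,m-1]}g_i$ into the half-space constraint, which turns it into $\sum_{i\in[\ell+1,m-1]}\langle g_i,x^\star-x_m^\star\rangle\le-\langle\sum_{i\in[\ell]}\nabla f_i(x^\star),x^\star-x_m^\star\rangle-\mu_m\|x^\star-x_m^\star\|^2$. Feasibility then reduces to whether the free variables $g_{\ell+1},\dots,g_{m-1}$, ranging over their respective balls, can make the left-hand side small enough, i.e.\ whether its minimum meets this bound. Since the balls are decoupled, the minimization separates term by term, and by Cauchy--Schwarz the smallest value of $\langle g_i,x^\star-x_m^\star\rangle$ over the $i$-th ball equals $\tfrac{L_i+\mu_i}{2}\langle x^\star-x_i^\star,x^\star-x_m^\star\rangle-\tfrac{L_i-\mu_i}{2}\|x^\star-x_i^\star\|\,\|x^\star-x_m^\star\|$ (the center's contribution minus the radius times $\|x^\star-x_m^\star\|$). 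Summing these and rearranging produces exactly the stated inequality.

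The genuinely routine parts are the term-by-term minimization and the final rearrangement; the steps needing care are (i) reading off the correct constraint for the non-smooth summand, namely the half-space form of~\eqref{int} rather than the ball form~\eqref{eq:geom}, which is valid only for finite $L$, and (ii) the degenerate case $x^\star=x_m^\star$, where $x^\star-x_m^\star$ vanishes, the eliminated constraint on $g_m$ becomes vacuous, feasibility is immediate, and both sides of the claimed inequality collapse to $0\le 0$. I expect (i) to be the main conceptual obstacle, as it is precisely where the smooth and non-smooth cases diverge; the remainder parallels the earlier proofs.
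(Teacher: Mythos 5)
Your proof is correct, but it follows a genuinely different route from the paper's. The paper obtains this corollary exactly the way it obtains Corollary~\ref{Coro.3_2} from Theorem~\ref{Theo.1}: it takes the characterization of Proposition~\ref{Por.P1} (finite $L_m$), divides through by $L_m$, and lets $L_m\to\infty$; no direct argument is written out. You instead redo the feasibility argument from scratch: optimality plus Lemma~\ref{Lem1} reduces the question to intersecting the balls \eqref{eq:geom} for $i\in[\ell+1,m-1]$ (shifted by the fixed offsets $\nabla f_i(x^\star)$, $i\in[\ell]$) with the half-space form of \eqref{int} for $f_m$, after which eliminating $g_m$ and minimizing the resulting linear functional ball-by-ball via Cauchy--Schwarz yields the stated inequality. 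This is precisely the structure of the paper's own proof of Proposition~\ref{Prop.2} (the case $m=2$, $\ell=0$), extended to $m$ summands, so it stays entirely within the paper's toolkit. What your route buys is rigor on a point the paper glosses over: $\mathcal{M}_{\mu_m,\infty}(x_m^\star)$ strictly contains the union of the classes $\mathcal{M}_{\mu_m,L_m}(x_m^\star)$ over finite $L_m$ (it includes non-differentiable functions), so the limiting argument needs an extra justification that the limit of the finite-$L_m$ characterizations really captures the $L_m=\infty$ class --- whereas Lemma~\ref{Lem1} is stated for $L=\infty$ directly, making your both directions exact. What the paper's route buys is brevity, and consistency with how it already derived Corollary~\ref{Coro.3_2}. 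Two minor remarks: your handling of the degenerate case $x^\star=x_m^\star$ is a welcome addition (the paper never mentions it), and you silently corrected the statement's typo $\nabla f(x^\star)$, which should read $\nabla f_i(x^\star)$ inside the sum over $i\in[\ell]$.
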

\bigskip

\section{Bounds when summand minimizers lie in a ball}\label{Sec.J}
Hendrickx et al. \cite{hendrickx2020stability} consider a sum of smooth strongly convex functions $f_i\in\mathcal{F}_{\mu, L}(\mathbb{R}^n)$ in the case that all summand minimizers $x_i^\star$ lie in a unit ball centered at the origin, i.e. $\|x^\star_i\|\leq 1$ for all $i \in [m]$. In \cite[Theorem 1]{hendrickx2020stability} they prove the following bound on the norm of a potential minimizer $x^\star$
\[ \|x^\star\|\leq 1+\sqrt{\kappa}, \quad \text{where $\kappa=\tfrac{L}{\mu}$.}\]
\noindent The following theorem gives a tighter bound. 

\smallskip\begin{theorem}\label{Theo.2}
Let $f_i\in\mathcal{M}_{\mu, L}(x^\star_i)$  with $0<\mu< L<\infty$, $i\in [m]$, and $\|x^\star_i-\bar x\|\leq 1$.
 If $x^\star\in\argmin \sum_{i=1}^m f_i(x)$  then 
$$
\|x^\star-\bar x\|\leq \tfrac{1}{2}\left(\sqrt{\kappa}+\tfrac{1}{\sqrt{\kappa}}\right), \quad \text{where $\kappa=\tfrac{L}{\mu}$.}
$$
\end{theorem}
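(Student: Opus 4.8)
The plan is to translate the problem so that $\bar{x}=0$, which reduces the hypothesis to $\|x_i^\star\|\le 1$ for all $i$, and then to invoke the exact characterization already available for the smooth strongly convex case. Since every $f_i$ lies in $\mathcal{M}_{\mu,L}(x_i^\star)$, this is precisely the setting of Theorem~\ref{Theo.1} with $\mu_i=\mu$ and $L_i=L$ for all $i$. Because $x^\star$ minimizes the sum, Theorem~\ref{Theo.1} yields the necessary inequality $(L+\mu)\,\|\sum_{i\in[m]}(x^\star-x_i^\star)\|\le (L-\mu)\sum_{i\in[m]}\|x^\star-x_i^\star\|$. The whole argument then consists in extracting the announced norm bound from this single scalar inequality together with the ball constraint (equivalently, one could sum the inner-product form~\eqref{int} of Lemma~\ref{Lem1} over $i$ using $\sum_i g_i=0$, but the reduction to Theorem~\ref{Theo.1} is cleaner).

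First I would square both sides, which is legitimate as both are nonnegative, and apply the Cauchy--Schwarz/power-mean inequality $\big(\sum_i\|x^\star-x_i^\star\|\big)^2\le m\sum_i\|x^\star-x_i^\star\|^2$ to the right-hand side. Introducing the centroid $\bar{y}=\tfrac1m\sum_i x_i^\star$, I would use $\sum_i(x^\star-x_i^\star)=m(x^\star-\bar{y})$ on the left, and the variance decomposition $\sum_i\|x^\star-x_i^\star\|^2=m\|x^\star-\bar{y}\|^2+\sum_i\|x_i^\star-\bar{y}\|^2$ on the right. The ball hypothesis enters here through $\sum_i\|x_i^\star-\bar{y}\|^2=\sum_i\|x_i^\star\|^2-m\|\bar{y}\|^2\le m\big(1-\|\bar{y}\|^2\big)$. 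After dividing by $m$ and using $(L+\mu)^2-(L-\mu)^2=4L\mu$, the inequality collapses to the compact form $4L\mu\,\|x^\star-\bar{y}\|^2\le (L-\mu)^2\big(1-\|\bar{y}\|^2\big)$.

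It remains to combine this with the triangle inequality $\|x^\star\|\le\|\bar{y}\|+\|x^\star-\bar{y}\|$, noting $\|\bar{y}\|\le 1$. Writing $A=\tfrac{L-\mu}{2\sqrt{L\mu}}$ and $r=\|\bar{y}\|\in[0,1]$, the previous bound reads $\|x^\star-\bar{y}\|\le A\sqrt{1-r^2}$, so $\|x^\star\|\le r+A\sqrt{1-r^2}$; maximizing the right-hand side over $r\in[0,1]$ gives $\sqrt{1+A^2}=\tfrac{L+\mu}{2\sqrt{L\mu}}=\tfrac12(\sqrt{\kappa}+1/\sqrt{\kappa})$, which is the claimed bound. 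I expect the main obstacle to be arranging the Cauchy--Schwarz step so that no slack is introduced: the naive bound $\|x^\star-x_i^\star\|\le\|x^\star\|+1$ loses a full additive unit and only yields $\|x^\star\|\le\kappa$, whereas the centroid/variance decomposition is exactly what makes the two applications of Cauchy--Schwarz simultaneously tight (attained when all $x_i^\star$ lie on the unit sphere with their orthogonal components cancelling), thereby recovering the sharp constant. The closing one-variable maximization is routine.
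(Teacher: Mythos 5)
Your proof is correct, and it shares its first half with the paper's argument: the paper likewise starts from the necessity direction of Theorem~\ref{Theo.1}, squares it (inequality \eqref{Cor1.inq}), and bounds the cross terms via $2\|x^\star-x_i^\star\|\|x^\star-x_j^\star\|\le\|x^\star-x_i^\star\|^2+\|x^\star-x_j^\star\|^2$, which is precisely your Cauchy--Schwarz step $\big(\sum_i\|x^\star-x_i^\star\|\big)^2\le m\sum_i\|x^\star-x_i^\star\|^2$ written pairwise, so the paper's inequality \eqref{T2.1} agrees with yours after expanding $\big\|\sum_{i}(x^\star-x_i^\star)\big\|^2$. The two arguments then genuinely diverge in how the unit-ball hypothesis is exploited. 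The paper adds to \eqref{T2.1} the expanded ball constraints \eqref{T2.2} with carefully chosen multipliers $\alpha$ and $\tfrac{1}{4m}\big(\sqrt{\kappa}+\tfrac{1}{\sqrt{\kappa}}\big)^2$, and the whole sum collapses into $\|x^\star\|^2-\tfrac14\big(\sqrt{\kappa}+\tfrac{1}{\sqrt{\kappa}}\big)^2+\|\cdot\|^2\le 0$: a compact sum-of-squares certificate that is immediate to verify but whose multipliers must essentially be guessed and which hides where the constant comes from. You instead pass to the centroid $\bar y$, use the variance decomposition together with $\sum_i\|x_i^\star\|^2\le m$ to reach the clean intermediate inequality $4L\mu\,\|x^\star-\bar y\|^2\le (L-\mu)^2\big(1-\|\bar y\|^2\big)$, and finish with the triangle inequality and a one-variable maximization giving $\sqrt{1+A^2}=\tfrac12\big(\sqrt{\kappa}+\tfrac{1}{\sqrt{\kappa}}\big)$; all of these steps check out. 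Your route is slightly longer but more transparent: it isolates an interpretable trade-off between $\|\bar y\|$ and $\|x^\star-\bar y\|$ and exhibits the near-extremal configuration (all $x_i^\star$ on the unit sphere, $\|\bar y\|=1/\sqrt{1+A^2}$), which explains why the constant has the value it does, whereas the paper's certificate buys brevity and a purely mechanical verification.
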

\smallskip \begin{proof}
Without loss of generality, we may assume that $\bar x=0$. Since $2\| x^\star-x^\star_i\| \| x^\star-x^\star_j\|\leq \| x^\star-x^\star_i\|^2+ \| x^\star-x^\star_j\|^2$, inequality \eqref{Cor1.inq} may be written as
\begin{align}\label{T2.1}
\nonumber & 4\kappa\sum_{i\in [m]} \| x^\star-x_i^\star\|^2+2(\kappa+1)^2
\sum_{\substack{i<j \\ i, j\in [m]}}  \langle x^\star-x_i^\star, x^\star-x_j^\star \rangle \\ 
& -(\kappa-1)^2\sum_{\substack{i<j \\ i, j\in [m]}} \left( \| x^\star-x^\star_i\|^2+ \| x^\star-x^\star_j\|^2 \right)=\\
\nonumber &  \kappa\left((\sqrt{\kappa}+\tfrac{1}{\sqrt{\kappa}})^2-m(\sqrt{\kappa}-\tfrac{1}{\sqrt{\kappa}})^2\right)\sum_{i\in [m]} \| x^\star-x_i^\star\|^2+\\
\nonumber & 
2\kappa\left( \sqrt{\kappa}+\tfrac{1}{\sqrt{\kappa}} \right)^2\sum_{\substack{i<j \\ i, j\in [m]}}  \langle x^\star-x_i^\star, x^\star-x_j^\star \rangle\leq 0.
\end{align}
By the assumptions, we have for $i\in\{1, \dots, m\}$
\begin{align}\label{T2.2}
\|x^\star-x^\star_i\|^2+\|x^\star\|^2+2\langle x^\star-x^\star_i, x^\star\rangle-1\leq 0.
\end{align}
Suppose that $\alpha=\big(\tfrac{1}{2m\sqrt{\kappa}(\sqrt{\kappa}-\tfrac{1}{\sqrt{\kappa}})}\big(\sqrt{\kappa}+\tfrac{1}{\sqrt{\kappa}}\big)\big)^2$. By multiplying inequalities \eqref{T2.1} and \eqref{T2.2} by $\alpha$ and $\tfrac{1}{4m}\big(\sqrt{\kappa}+\tfrac{1}{\sqrt{\kappa}}\big)^2$, respectively, and summing these $m+1$ inequalities we get
\begin{align*}
& \|x^\star\|^2-\tfrac{1}{4}\big(\sqrt{\kappa}+\tfrac{1}{\sqrt{\kappa}}\big)^2+\big\| \tfrac{1}{2} (\sqrt{\kappa}-\tfrac{1}{\sqrt{\kappa}} )x^\star+\sqrt{\alpha}(\kappa+1) \\
& \ \ \sum_{i\in[m]} (x^\star-x^\star_i)  \big\|^2\leq 0.
\end{align*}
The above inequality implies the desired bound and the proof is complete. 
\end{proof}

We now addresses the setting where one of the functions (w.l.o.g. $f_m$) is an arbitrary closed proper convex function.

\smallskip\begin{theorem}\label{Theo.3}
Let $f_i\in\mathcal{M}_{\mu, L}(x_i^\star)$, $i\in[m-1]$,  with $0<\mu< L<\infty$ and let
$f_m\in\mathcal{M}_{0, \infty}(x_m^\star)$. Assume that 
 $\|x^\star_i-\bar x\|\leq 1$ for $i\in[m]$. 
 If  $x^\star\in\argmin \sum_{i=1}^m f_i(x)$ then 
$$
\|x^\star-\bar x\|\leq \sqrt{\kappa+1}, \ \   \text{where $\kappa=\tfrac{L}{\mu}$.}
$$
\end{theorem}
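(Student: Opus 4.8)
The plan is to mirror the proof of Theorem~\ref{Theo.2}, replacing the all-smooth characterization \eqref{Cor1.inq} by the one-nonsmooth characterization of Corollary~\ref{Coro.3_2}. As there, I would first set $\bar x=0$ without loss of generality; since the claimed bound depends on $(\mu,L)$ only through $\kappa$ while the characterizing inequality is homogeneous of degree one in $(\mu,L)$, one may additionally normalize $\mu=1$, $L=\kappa$. Specializing Corollary~\ref{Coro.3_2} to $L_i=L$, $\mu_i=\mu$ for $i\in[m-1]$ and $\mu_m=0$, $L_m=\infty$ shows that any minimizer $x^\star$ obeys
\begin{equation*}
\tfrac{L+\mu}{2}\sum_{i\in[m-1]}\langle x^\star-x_i^\star, x^\star-x_m^\star\rangle\le \tfrac{L-\mu}{2}\sum_{i\in[m-1]}\|x^\star-x_i^\star\|\,\|x^\star-x_m^\star\|.
\end{equation*}

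Next, exactly as in the step leading to \eqref{T2.1}, I would linearize the products of norms on the right via $2\|u\|\|v\|\le\|u\|^2+\|v\|^2$, producing a genuine quadratic inequality in $x^\star$ and the minimizers. A short computation shows its $\|x^\star\|^2$ coefficient is exactly $-\mu(m-1)$, which is what makes the final bound independent of $m$. Crucially, this linearization is tight at the conjectured extremal configuration — all smooth minimizers coinciding and $\|x^\star-x_i^\star\|=\|x^\star-x_m^\star\|$ for every $i$ — so no slack is introduced; that configuration is the planar one in which $x_1^\star$ and $x_m^\star$ lie on the unit sphere and subtend the angle $\theta_0$ at $x^\star$, with $\cos\theta_0=\tfrac{\kappa-1}{\kappa+1}$, giving $\|x^\star\|=1/\sin(\theta_0/2)=\sqrt{\kappa+1}$ and confirming the bound is attained.

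I would then take a nonnegative combination of this quadratic inequality with the $m$ ball constraints $\|x_i^\star-\bar x\|^2\le1$, using one multiplier $p$ for the $m-1$ smooth minimizers and one multiplier $q$ for $x_m^\star$. Completing the square first in $x^\star$ and then jointly in the $x_i^\star$ should collapse the combination into
\begin{equation*}
\|x^\star-\bar x\|^2-(\kappa+1)+\|V\|^2\le0,
\end{equation*}
where $V$ is a sum-of-squares residual whose principal term is proportional to $x^\star-\bar x-\tfrac{L+\mu}{2\mu(m-1)}\sum_{i\in[m-1]}(x_i^\star-\bar x)-\tfrac{L+\mu}{2\mu}(x_m^\star-\bar x)$ and vanishes at the extremal configuration above. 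Dropping the square gives $\|x^\star-\bar x\|^2\le\kappa+1$.

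The hard part is pinning down the multipliers $p,q$ — equivalently, exhibiting the S-procedure/SOS certificate. After completing the square in $x^\star$, one is left with a quadratic form in $(x_1^\star,\dots,x_m^\star)$ that must be made positive semidefinite while the surviving constant matches $(\kappa+1)$ times the effective coefficient of $\|x^\star\|^2$; because the only coupling is through $\langle\sum_{i<m}(x^\star-x_i^\star),x^\star-x_m^\star\rangle$, exploiting the symmetry among the smooth minimizers reduces this to a positive-semidefiniteness check on a small ($2\times2$ after reduction) matrix, and it is this check that fixes the value $\kappa+1$. An equivalent, fully polynomial alternative avoids the norm-linearization: keep the subgradient form, i.e. $\sum_i g_i=0$ with \eqref{int} for $i\in[m-1]$ and $\langle g_m,x^\star-x_m^\star\rangle\ge0$ for the nonsmooth summand, eliminate the $g_i$ by completing the square as in the second half of the proof of Theorem~\ref{T.1}, and combine with the ball constraints; the same certificate computation reappears at the end.
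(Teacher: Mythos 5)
Your plan coincides, step for step, with the paper's own proof: specialize Corollary~\ref{Coro.3_2} to $L_i=L$, $\mu_i=\mu$ for $i\in[m-1]$ and $\mu_m=0$, linearize the products of norms via $2\|u\|\|v\|\le\|u\|^2+\|v\|^2$, and then take a nonnegative combination of the resulting quadratic inequality with the $m$ ball constraints that collapses, after completing squares, to $\|x^\star\|^2-(\kappa+1)+\|V\|^2\le 0$. The problem is that you stop exactly where the proof begins in earnest. The multipliers $p,q$ (and the weight on the quadratic inequality itself) are never determined, the completion of squares is never carried out, and the positive-semidefiniteness check that you say ``fixes the value $\kappa+1$'' is never performed --- you yourself label this ``the hard part.'' The two things you do verify --- the coefficient $\mu(m-1)$ of $\|x^\star\|^2$ (your sign is off, but the magnitude is what matters) and the planar tangency configuration with $\cos\theta_0=\tfrac{\kappa-1}{\kappa+1}$ attaining $\|x^\star\|=\sqrt{\kappa+1}$ --- only show that no bound \emph{better} than $\sqrt{\kappa+1}$ is possible, i.e.\ the lower-bound direction. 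The theorem asserts the upper bound, and that is established precisely by the certificate you defer; as written, your argument assumes the answer and sketches a procedure that would confirm it.

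For the record, here is what your plan needs to produce. Multiply the linearized inequality \eqref{T3.1} by $\tfrac{1}{2m-2}\bigl(1+\tfrac{1}{\kappa}\bigr)$, each smooth-minimizer ball constraint \eqref{T3.2} by $\tfrac{1}{2m-2}(\kappa+1)$, and the ball constraint \eqref{T3.3} for $x_m^\star$ by $\tfrac{1}{2}(\kappa+1)$; summing these $m+1$ inequalities yields the identity
\begin{align*}
& \|x^\star\|^2-(\kappa+1)+\tfrac{1}{4\kappa(m-1)}\sum_{i\in[m-1]}\big\| 2\kappa x^\star+(\kappa+1)(x^\star-x^\star_m) \\
& \hspace{3.2cm} +(\kappa+1)(x^\star-x^\star_i)\big\|^2\ \leq\ 0,
\end{align*}
from which $\|x^\star\|^2\le\kappa+1$ follows by dropping the sum of squares. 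Note that the constant works out because $p(m-1)+q=\tfrac{\kappa+1}{2}+\tfrac{\kappa+1}{2}=\kappa+1$, and the residual $V$ indeed vanishes at your extremal configuration. Until you exhibit these multipliers (or equivalent ones) and verify this algebra, the proof is incomplete.
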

\begin{proof}
The proof is analogous to that of Theorem \ref{Theo.2}. We assume that $\bar x=0$. By Corollary \ref{Coro.3_2}, we have
\begin{align}\label{T3.1}
\nonumber & (\kappa+1)\sum_{i\in[m-1]}  \langle x^\star-x_i^\star, x^\star-x_m^\star\rangle- 
 \tfrac{1}{2}(\kappa-1)\\
& \sum_{i\in[m-1]} \left( \| x^\star-x^\star_i\|^2+ \| x^\star-x^\star_m\|^2 \right)\leq 0.
\end{align}
By the assumptions, we have for $i\in[m-1]$
\begin{align}\label{T3.2}
\|x^\star-x^\star_i\|^2+\|x^\star\|^2+2\langle x^\star-x^\star_i, x^\star\rangle-1\leq 0,
\end{align}
and
\begin{align}\label{T3.3}
\|x^\star-x^\star_m\|^2+\|x^\star\|^2+2\langle x^\star-x^\star_m, x^\star\rangle-1\leq 0.
\end{align}
 By multiplying inequalities \eqref{T3.1}, \eqref{T3.2} and \eqref{T3.3} by $\tfrac{1}{2m-2}(1+\tfrac{1}{\kappa})$, $\tfrac{1}{2m-2}(\kappa+1)$ and $\tfrac{1}{2}(\kappa+1)$, respectively, and summing them we obtain
\begin{align*}    
&  \|x^\star\|^2-(\kappa+1)+\tfrac{1}{4\kappa(m-1)}\sum_{i\in[m-1]}\big\| 2\kappa x^\star+(\kappa+1)(x^\star-
\\
 & \ x^\star_m)+(\kappa+1)(x^\star-x^\star_i)\big\|^2\leq 0.
 \end{align*}
 The above inequality implies the desired bound and the proof is complete. 
\end{proof}

\section{Conclusions}


 This work has demonstrated how the use of interpolation constraints simplifies the analysis of questions about convex functions and their minimizers. This framework can in principle allow for even more accurate descriptions of those sets of potential minimizers if additional information on the summands is provided, such as the bounds or exact values of the summands or their gradient at some remarkable points. 

\bibliographystyle{IEEEtran}
\bibliography{references} 

\end{document}